\newtheorem{theorem}{Theorem}
\theoremstyle{plain}
\newtheorem*{acknowledgement}{Acknowledgement}
\newtheorem{corollary}[theorem]{Corollary}
\newtheorem{definition}[theorem]{Definition}
\newtheorem{lemma}[theorem]{Lemma}
\newtheorem{proposition}[theorem]{Proposition}
\theoremstyle{remark}
\newtheorem{remark}{Remark}
\numberwithin{equation}{section}
\begin{document}
\title[Homology of Lie algebras like $\mathfrak{gl}(\infty,R)$]{On the homology of Lie algebras like $\mathfrak{gl}(\infty,R)$}
\author{Oliver Braunling}
\address{Freiburg Institute for Advanced Studies (FRIAS), University of Freiburg,
D-79104 Freiburg im\ Breisgau, Germany}
\thanks{The author was supported by DFG GK1821 \textquotedblleft Cohomological Methods
in Geometry\text{\textquotedblright}\ and a Junior Fellowship at the Freiburg
Institute for Advanced Studies (FRIAS)}
\subjclass[2010]{ Primary 20J05 }
\keywords{Infinite matrix algebras, ring-level delooping}

\begin{abstract}
We revisit a recent paper of Fialowski and Iohara. They compute the homology
of the Lie algebra $\mathfrak{gl}(\infty,R)$ for $R$ an associative unital
algebra over a field of characteristic zero. We explain how to obtain
essentially the same results by a completely different method.

\end{abstract}
\maketitle

This note is inspired by the recent paper \cite{fioh} of Fialowski and Iohara.
They compute the Lie algebra homology of the infinite matrix algebra
$\mathfrak{gl}(\infty,R)$. We shall recall below how this Lie algebra is
defined. Their paper takes up a thread of research initiated by Feigin
and\ Tsygan, and generalizes one of the main results of \cite{MR705056}.

While $\mathfrak{gl}(\infty,R)$ naturally acts on Laurent polynomials
$R[t,t^{-1}]$, in this note we consider a very closely related variant, which
acts in an analogous fashion on formal Laurent series $R((t))$. This leads to
a\textit{\ topologically} \textit{completed} variant of the same Lie algebra,
call it $\mathfrak{gl}_{\operatorname*{top}}(\infty,R)$. However, this little
change of perspective is only made for convenience, it is really not the main
point of this text. Instead, our focus is on computing the homology of
$\mathfrak{gl}_{\operatorname*{top}}(\infty,R)$ in a completely different
fashion than the methods used by Fialowski and Iohara.\medskip

Nonetheless, a posteriori it will turn out that $\mathfrak{gl}(\infty,R)$ and
$\mathfrak{gl}_{\operatorname*{top}}(\infty,R)$ have the same homology.

\section{\label{sect_1}Statement of the results}

Suppose $k$ is a field of characteristic zero and $R$ a unital associative $k
$-algebra. Define the \emph{algebra of generalized Jacobi matrices} (following
\cite{MR705056})%
\[
J(R):=\left\{  (a_{i,j})_{i,j\in\mathbb{Z}}\text{ with }a_{i,j}\in
R\mid\exists N:a_{i,j}=0\text{ for all }i,j\text{ with }\left\vert
i-j\right\vert >N\right\}  \text{.}%
\]
This is again a unital associative $k$-algebra if we equip it with the usual
matrix multiplication. The finiteness condition on the width of the diagonal
support ensures that this multiplication is well-defined.

Write $\mathfrak{gl}(\infty,R)$ to denote the Lie algebra of $J(R)$. Moreover,
let $\mathfrak{gl}_{n}(R)$ denote the usual Lie algebra of $(n\times
n)$-matrices with entries in $R$, and $\mathfrak{gl}_{\infty}(R):=\underset
{n\longrightarrow\infty}{\operatorname*{colim}}\mathfrak{gl}_{n}(R)$, where
$\mathfrak{gl}_{n}(R)$ is embedded into $\mathfrak{gl}_{n+1}(R)$ as the
upper-left $(n\times n)$-minor, and the remaining $(n+1)$-st row and column
being set to zero. Again, $\mathfrak{gl}_{\infty}(R)$ is a Lie algebra.
Despite the similar notation, it is very different from $\mathfrak{gl}%
(\infty,R)$.

The main results of \cite{fioh} are:

\begin{theorem}
[Fialowski--Iohara \cite{fioh}]\label{thm_FI}Let $R$ be an associative unital
$k$-algebra, where $k$ is a field of characteristic zero.

\begin{enumerate}
\item There is a canonical isomorphism of graded algebras $HC_{\bullet
}(J(R))\cong HC_{\bullet-1}(R)$.

\item There is a canonical isomorphism of graded algebras $HH_{\bullet
}(J(R))\cong HH_{\bullet-1}(R)$.

\item There is a canonical isomorphism of the primitive parts
$\operatorname*{Prim}H_{\bullet}(\mathfrak{gl}(\infty,R),k)\cong%
\operatorname*{Prim}H_{\bullet}(\mathfrak{gl}_{\infty}(R),k)[-1]$.
\end{enumerate}
\end{theorem}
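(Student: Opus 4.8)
My plan is to deduce all three statements from a single ``ring-level delooping'' phenomenon together with the Loday--Quillen--Tsygan theorem, using almost nothing about the precise combinatorics of generalized Jacobi matrices.

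\emph{Parts (1) and (2).} Rather than working with $J(R)$ directly I would pass to the topologically completed algebra $\widehat{J}(R):=\operatorname{End}^{\mathrm{cont}}_{R}(R((t)))$ of continuous $R$-linear endomorphisms of $R((t))$ --- the associative algebra underlying $\mathfrak{gl}_{\operatorname*{top}}(\infty,R)$. Here $R((t))$ is a Tate $R$-module and $\widehat{J}(R)$ its endomorphism algebra; it contains the ideal $M_{\infty}(R)$ of finite-rank (finitely supported) operators, which is $H$-unital and, by Morita invariance (it is a filtered colimit of matrix algebras over $R$), has the same Hochschild and cyclic homology as $R$. I would then invoke the delooping for operators on Tate modules: $\widehat{J}(R)$ is Morita-theoretically a one-step suspension of $R$, the category of Tate $R$-modules (in which the perfect $\widehat{J}(R)$-modules sit) being obtained from $\operatorname{Perf}(R)$ by a localization sequence whose third term is built from $\operatorname{Pro}$-objects, hence has countable coproducts and is annihilated by every localizing invariant via an Eilenberg swindle. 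Since $HH_{\bullet}$ and $HC_{\bullet}$ are finitary localizing invariants, this forces degree-shift isomorphisms $HH_{\bullet}(\widehat{J}(R))\cong HH_{\bullet-1}(R)$ and $HC_{\bullet}(\widehat{J}(R))\cong HC_{\bullet-1}(R)$; compatibility with the graded algebra structures follows because the connecting map is induced by a morphism of mixed complexes and the whole picture is compatible with the block sum of operators, which realizes the product. A concrete shadow of the same computation: writing $J(R)\cong\operatorname{Map}(\mathbb{Z},R)\rtimes_{\sigma}\mathbb{Z}$ for $\sigma$ the shift and running the standard long exact sequence for the Hochschild (resp. cyclic) homology of a $\mathbb{Z}$-crossed product, one needs that $1-\sigma_{\ast}$ is surjective on $HH_{\bullet}(\operatorname{Map}(\mathbb{Z},R))$ with kernel the diagonal copy of $HH_{\bullet}(R)$, which is exactly what produces the shift. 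Finally, since $J(R)$ and $\widehat{J}(R)$ both contain $M_{\infty}(R)$ as an $H$-unital ideal with ``the same'' quotient, the computation descends to $J(R)$ itself; this is also the a posteriori coincidence promised in the introduction.

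\emph{Part (3).} I would avoid the Chevalley--Eilenberg complex of $\mathfrak{gl}(\infty,R)$ and use Loday--Quillen--Tsygan. The associative algebra $A:=J(R)$ satisfies $M_{n}(A)\cong A$ for all $n$, which is precisely the stability needed to equip $H_{\bullet}(\mathfrak{gl}(\infty,R),k)=H_{\bullet}(\mathfrak{gl}(A),k)$ with a graded-commutative Hopf algebra structure --- via the block-sum inclusion $\mathfrak{gl}(A)\times\mathfrak{gl}(A)\hookrightarrow\mathfrak{gl}_{2}(A)\cong\mathfrak{gl}(A)$ --- and to run the Loday--Quillen--Tsygan argument, giving $\operatorname*{Prim}H_{\bullet}(\mathfrak{gl}(\infty,R),k)\cong HC_{\bullet-1}(A)$. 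By part (1) this equals $HC_{\bullet-2}(R)$, whereas the classical Loday--Quillen--Tsygan theorem applied to $\mathfrak{gl}_{\infty}(R)$ identifies $\operatorname*{Prim}H_{\bullet}(\mathfrak{gl}_{\infty}(R),k)$ with $HC_{\bullet-1}(R)$; comparing these in homological degree $\bullet$ yields the asserted shift by $[-1]$.

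\emph{The main obstacle} is the delooping used in (1) and (2): one must pin down the relevant localization sequence (or, in the elementary reformulation, control $HH_{\bullet}$ and $HC_{\bullet}$ of the germ quotient $J(R)/M_{\infty}(R)$, equivalently the action of the shift on the Hochschild homology of an infinite product of copies of $R$ --- the point being the distinction between $\prod_{\mathbb{Z}}$ and $\bigoplus_{\mathbb{Z}}$) and check that the resulting shift is multiplicative. A secondary point, needed for (3), is to make the Loday--Quillen--Tsygan input rigorous for the already-infinite Lie algebra $\mathfrak{gl}(\infty,R)$: either cite a version valid for stable rings, or show that the further stabilization $\mathfrak{gl}(\infty,R)\hookrightarrow\mathfrak{gl}_{\infty}(J(R))$ is an isomorphism on homology. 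The excision inputs, by contrast, are routine, since $M_{\infty}(R)$ is $H$-unital and Wodzicki's theorem applies.
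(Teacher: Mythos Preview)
This theorem is not proved in the present paper at all: it is quoted as the main result of Fialowski--Iohara \cite{fioh}, who (as the paper remarks after Corollary~\ref{cor_1}) obtain it by a Hochschild--Serre type spectral sequence together with explicit term-by-term computations. The paper's own contribution is the analogue for the completed algebra $E(R)$ (Theorem~\ref{thm_body_1} and Corollary~\ref{cor_1}), and \emph{that} is precisely the argument you give for $\widehat{J}(R)$: the Tate-category delooping via Keller's localization theorem plus an Eilenberg swindle, followed by Loday--Quillen--Tsygan and the stability $M_n(E(R))\cong E(R)$. So for the completed object your proposal and the paper coincide.

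The gap is exactly the step you flag as ``the main obstacle'': passing from $\widehat{J}(R)=E(R)$ back to $J(R)$. Your excision sketch asserts that $J(R)$ and $E(R)$ share $M_\infty(R)$ as an $H$-unital ideal ``with `the same' quotient'', but the quotients $J(R)/M_\infty(R)$ and $E(R)/M_\infty(R)$ are genuinely different algebras, and you give no reason why their Hochschild or cyclic homology should agree; without that, Wodzicki excision buys nothing. The alternative crossed-product route $J(R)\cong\bigl(\prod_{\mathbb{Z}}R\bigr)\rtimes_\sigma\mathbb{Z}$ runs into the same wall from the other side: the Pimsner--Voiculescu type sequence needs you to compute $HH_\bullet\bigl(\prod_{\mathbb{Z}}R\bigr)$ and the action of the shift on it, and Hochschild homology does not in general commute with infinite products, so the claim that $1-\sigma_\ast$ is surjective with kernel the diagonal $HH_\bullet(R)$ is precisely the hard, uncompleted computation that the categorical delooping bypasses for $E(R)$ but not for $J(R)$. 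In short, your proposal reproduces the paper's proof of its own theorem about $E(R)$, but does not close the gap to the quoted Fialowski--Iohara statement about $J(R)$; the paper itself does not attempt to close it either and relies on \cite{fioh} for that.
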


By speaking of the primitive part, we refer to the fact that the Lie homology
algebra $H_{\bullet}(\mathfrak{g},k)$, for an arbitrary Lie algebra
$\mathfrak{g}$ over a field $k$, has a graded cocommutative coalgebra
structure with counit, induced from the comultiplication%
\[
\triangle:H_{\bullet}(\mathfrak{g},k)\overset{\delta}{\longrightarrow
}H_{\bullet}(\mathfrak{g}\oplus\mathfrak{g},k)\overset{\sigma}{\longrightarrow
}H_{\bullet}(\mathfrak{g},k)\otimes_{k}H_{\bullet}(\mathfrak{g},k)\text{,}%
\]
where $\delta:\mathfrak{g}\rightarrow\mathfrak{g}\oplus\mathfrak{g}$ is the
diagonal $g\mapsto(g,g)$, and $\sigma$ denotes the K\"{u}nneth isomorphism for
Lie homology. Thus, it has a notion of primitive elements, namely those $x$
satisfying $\triangle x=1\otimes x+x\otimes1$.

We define a variation of $J(R)$:\ We may take%
\[
E(R):=\left\{  \varphi\in\operatorname*{End}\nolimits_{R}(\left.
R((t))\right.  )\left\vert
\begin{array}
[c]{c}%
\text{(1) for every }n\in\mathbb{Z}\text{ there exists some }n^{\prime}%
\in\mathbb{Z}\\
\text{such that }\varphi(t^{n}R[[t]])\subseteq t^{n^{\prime}}R[[t]]\text{,
\textit{and}}\\
\text{(2) for every }m\in\mathbb{Z}\text{ there exists some }m^{\prime}%
\in\mathbb{Z}\\
\text{such that }\varphi(t^{m^{\prime}}R[[t]])\subseteq t^{m}R[[t]]
\end{array}
\right.  \right\}
\]
and let $\mathfrak{gl}_{\operatorname*{top}}(\infty,R)$ be its Lie algebra.

\begin{theorem}
(Theorem \ref{thm_body_1}) Let $R$ be an associative unital $k$-algebra, where
$k$ is a field of characteristic zero.

\begin{enumerate}
\item There is a canonical isomorphism of graded algebras $HC_{\bullet
}(E(R))\cong HC_{\bullet-1}(R)$,

\item There is a canonical isomorphism of graded algebras $HH_{\bullet
}(E(R))\cong HH_{\bullet-1}(R)$,

\item There is a canonical isomorphism of the primitive parts
$\operatorname*{Prim}H_{\bullet}(\mathfrak{gl}_{\operatorname*{top}}%
(\infty,R),k)\cong\operatorname*{Prim}H_{\bullet}(\mathfrak{gl}_{\infty
}(R),k)[-1]$.
\end{enumerate}
\end{theorem}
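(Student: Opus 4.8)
The plan is, rather than computing the Lie homology directly (as in \cite{fioh}), to recognise $E(R)$ as a \emph{ring-level delooping} of $R$ and to reduce everything to standard homological machinery: excision and Morita invariance for cyclic and Hochschild homology, an Eilenberg swindle, and the Loday--Quillen--Tsygan theorem. In this approach statements (1) and (2) are the heart of the matter, and (3) is deduced from them.

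First I would produce a short exact sequence of (not necessarily unital) associative $k$-algebras
\[
0 \longrightarrow \mathfrak{gl}_{\infty}(R) \longrightarrow \Gamma(R) \longrightarrow E(R) \longrightarrow 0 ,
\]
in which $\mathfrak{gl}_{\infty}(R)$, regarded as an associative algebra, occurs as the ideal of finite-rank operators and $\Gamma(R)$ is a \emph{flasque} algebra. Concretely, $\Gamma(R)$ should be the algebra of endomorphisms, subject to conditions analogous to (1)--(2), of a manifestly contractible enlargement of $R((t))$ --- morally a countable direct sum of copies of $R((t))$, or equivalently a Karoubi-type cone built on $\mathfrak{gl}_{\infty}(R)$. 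Since such an enlargement absorbs a countable direct sum of copies of itself, an Eilenberg swindle yields $HH_{\bullet}(\Gamma(R)) = HC_{\bullet}(\Gamma(R)) = 0$. Because $\mathfrak{gl}_{\infty}(R)$ has local units it is H-unital, hence satisfies excision in both $HH$ and $HC$ by Wodzicki's theorem. Combining the excision long exact sequences with this vanishing, the boundary maps become isomorphisms $HH_{n}(E(R)) \cong HH_{n-1}(\mathfrak{gl}_{\infty}(R))$ and $HC_{n}(E(R)) \cong HC_{n-1}(\mathfrak{gl}_{\infty}(R))$; Morita invariance, together with the fact that both theories commute with the filtered colimit defining $\mathfrak{gl}_{\infty}(R)$, identifies the right-hand sides with $HH_{\bullet}(R)$ and $HC_{\bullet}(R)$. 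This gives (1) and (2) as isomorphisms of graded vector spaces, and a routine verification that the boundary maps are compatible with the product structures upgrades them to isomorphisms of graded algebras.

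For (3) I would invoke Loday--Quillen--Tsygan twice. On one hand it gives $\operatorname*{Prim}H_{\bullet}(\mathfrak{gl}_{\infty}(R),k)\cong HC_{\bullet-1}(R)$. On the other hand, applied to $\mathfrak{gl}_{\operatorname*{top}}(\infty,R)$ --- whose underlying associative algebra is $E(R)$ --- it gives $\operatorname*{Prim}H_{\bullet}(\mathfrak{gl}_{\operatorname*{top}}(\infty,R),k)\cong HC_{\bullet-1}(E(R))$; then (1) yields $HC_{\bullet-1}(E(R))\cong HC_{\bullet-2}(R)\cong\operatorname*{Prim}H_{\bullet-1}(\mathfrak{gl}_{\infty}(R),k)$, which is precisely statement (3). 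What makes Loday--Quillen--Tsygan applicable to $\mathfrak{gl}_{\operatorname*{top}}(\infty,R)$, rather than to a literal $\mathfrak{gl}_{\infty}$, is that $E(R)$ is matricially stable: $E(R)\cong\mathrm{M}_{n}(E(R))$ for every $n$, since $R((t))^{\oplus n}\cong R((t))$ as topological $R$-modules, and this is enough to run the stabilisation and invariant-theory steps of the proof. (Alternatively one can remain with the short exact sequence above: flasqueness of $\Gamma(R)$ forces $H_{\bullet}(\Gamma(R)_{\operatorname*{Lie}},k)=k$, and a Hochschild--Serre argument then recovers $H_{\bullet}(\mathfrak{gl}_{\operatorname*{top}}(\infty,R),k)$ from the shifted homology of the ideal $\mathfrak{gl}_{\infty}(R)$.) Here the graded-cocommutative Hopf algebra structure on $H_{\bullet}(-,k)$ supplied by Milnor--Moore is what lets one isolate primitives throughout and see that they are additive on direct sums.

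I expect the main obstacle to be the first step: writing down the flasque algebra $\Gamma(R)$ explicitly and checking that the quotient $\Gamma(R)/\mathfrak{gl}_{\infty}(R)$ is \emph{exactly} $E(R)$ --- that is, that the rather rigid continuity and cocontinuity conditions (1)--(2) are precisely what survives in the Calkin-type quotient. This is the concrete content of calling $E(R)$ a ring-level delooping. A secondary point requiring care is the Loday--Quillen--Tsygan input for $\mathfrak{gl}_{\operatorname*{top}}(\infty,R)$; the remaining ingredients --- excision, Morita invariance, the Eilenberg swindle, Milnor--Moore --- are standard.
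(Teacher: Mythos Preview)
Your treatment of part~(3) coincides with the paper's: both apply Loday--Quillen--Tsygan to $E(R)$ and to $R$, and bridge the two via the matricial stability $E(R)\cong\mathbb{M}_n(E(R))$ coming from $R((t))^{\oplus n}\cong R((t))$, so that $\mathfrak{gl}_{\operatorname*{top}}(\infty,R)=\mathfrak{gl}_1(E(R))$ already computes $\mathfrak{gl}_\infty(E(R))$.

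For parts (1) and (2), however, the short exact sequence you propose does not exist as stated. The algebra $E(R)$ is \emph{not} a Karoubi-type suspension of $R$: there is no flasque $\Gamma(R)$ with a two-sided ideal isomorphic to $\mathfrak{gl}_\infty(R)$ and quotient exactly $E(R)$. Your own candidate illustrates the difficulty: endomorphisms of a countable sum $R((t))^{(\mathbb{N})}$, subject to Tate-like conditions, have as ``finite-rank'' ideal something Morita-equivalent to $E(R)$ itself, not to $R$, so the resulting Calkin quotient is a suspension of $E(R)$ rather than $E(R)$. The structure that \emph{is} available on $E(R)$ is two-sided: the Beilinson cubical decomposition $E(R)=I^{+}(R)+I^{-}(R)$ with $I^{+}\cap I^{-}=I^{0}(R)$, where both $I^{\pm}$ are flasque and $I^{0}$ is Morita-equivalent to $R$. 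Excision then runs through the pair of sequences $0\to I^{0}\to I^{+}\to E/I^{-}\to 0$ and $0\to I^{-}\to E\to E/I^{-}\to 0$, yielding $HC_n(E)\cong HC_n(E/I^{-})\cong HC_{n-1}(I^{0})\cong HC_{n-1}(R)$.

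The paper takes a genuinely different route for (1)--(2): it works at the level of $k$-linear exact categories, using Keller's cyclic homology for such. The key input is the equivalence $\mathsf{P}_f(E(R))\simeq\mathsf{Tate}_{\aleph_0}(\mathsf{P}_f(R))$ (Lemma~\ref{lemma_ModuleCat}); the delooping $HC_\bullet(\mathsf{Tate}_{\aleph_0}(\mathcal{C}))\cong HC_{\bullet-1}(\mathcal{C})$ then follows from Keller's localization theorem together with Eilenberg swindles on $\mathsf{Ind}^a_{\aleph_0}(\mathcal{C})$ and $\mathsf{Pro}^a_{\aleph_0}(\mathcal{C})$ --- again \emph{two} flasque objects, the categorical avatars of $I^{\pm}$. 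So your strategy (flasqueness $+$ excision $+$ Morita) is correct in spirit, but the geometry of $E(R)$ forces a two-sided vanishing argument rather than a single cone; once repaired along these lines, your algebra-level proof becomes precisely the ring-theoretic shadow of the paper's categorical one, with Wodzicki excision playing the role of Keller localization.
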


Of course this statement has exactly the same shape as the theorem of
Fialowski--Iohara. But even though this is a statement entirely about
associative algebras and Lie algebras, we shall prove it using a detour
through some \textit{category-theoretic concepts}. The key input for the proof
will be a localization theorem for categories. No spectral sequences or
auxiliary homology computations as in \cite{fioh} will appear. So, the method
of proof is quite different. We will explain below how $E$ and $\mathfrak{gl}%
_{top}(\infty,R)$ are just topologically completed variants of $J$ and
$\mathfrak{gl}(\infty,R)$; and note that that since the right-hand side terms
in our theorem are precisely those as in the one of Fialowski--Iohara, this
topological completion obviously does not affect the homology at all.

Mild variations of the same proof also give a variant for group homology.
Write $\operatorname{GL}_{\infty}(R)$ for $\underset{n\longrightarrow\infty
}{\operatorname*{colim}}\operatorname{GL}_{n}(R)$, where $\operatorname{GL}%
_{n}(R)$ is embedded into $\operatorname{GL}_{n+1}(R)$ as the upper-left
$(n\times n)$-minor, and the remaining $(n+1)$-st row and column being set to
zero, except for the $(n+1,n+1)$ entry, which is set to the identity.

Although there does not exist an equally close connection between
$\operatorname{GL}_{\infty}(R)$ and $\mathfrak{gl}_{\infty}(R)$ as in the
world of real Lie groups and their Lie algebras, the analogous group homology
fact still holds true.

\begin{theorem}
\label{thm_intro_2}(Theorem \ref{thm_body_2}) Let $R$ be an associative unital
$k$-algebra. There is a canonical isomorphism of primitive parts
$\operatorname*{Prim}H_{n}(\operatorname{GL}_{\infty}(E(R)),\mathbb{Q}%
)\cong\operatorname*{Prim}H_{n-1}(\operatorname{GL}_{\infty}(R),\mathbb{Q})$
for all $n\geq2$.
\end{theorem}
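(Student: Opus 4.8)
The plan is to translate the statement entirely into rational algebraic $K$-theory, mirroring the way the Lie-algebra statement of Theorem~\ref{thm_body_1}(3) is obtained from the cyclic-homology delooping $HC_{\bullet}(E(R))\cong HC_{\bullet-1}(R)$ via Loday--Quillen--Tsygan. For any unital associative ring $A$, the homology $H_{\bullet}(\operatorname{GL}_{\infty}(A),\mathbb{Q})\cong H_{\bullet}(B\operatorname{GL}_{\infty}(A)^{+},\mathbb{Q})$ is a graded-commutative, graded-cocommutative Hopf algebra: the coproduct is the one induced by the diagonal as in the discussion above, and the product comes from the block sum $\operatorname{GL}_{\infty}(A)\times\operatorname{GL}_{\infty}(A)\rightarrow\operatorname{GL}_{\infty}(A)$. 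By the Milnor--Moore theorem together with the rational Hurewicz theorem, its primitive part in degree $n\geq1$ is $\pi_{n}(B\operatorname{GL}_{\infty}(A)^{+})\otimes_{\mathbb{Z}}\mathbb{Q}=K_{n}(A)\otimes_{\mathbb{Z}}\mathbb{Q}$. Hence the asserted isomorphism is equivalent to a natural isomorphism $K_{n}(E(R))\otimes\mathbb{Q}\cong K_{n-1}(R)\otimes\mathbb{Q}$ for all $n\geq2$.

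So it suffices to show that $E(R)$ is a ring-level delooping of $R$ in $K$-theory, i.e.\ that there is a canonical weak equivalence $K(E(R))\simeq\Sigma K(R)$; this is the $K$-theoretic counterpart of the isomorphism $HC_{\bullet}(E(R))\cong HC_{\bullet-1}(R)$ of Theorem~\ref{thm_body_1}, and I would prove it by the same mechanism. First, identify $E(R)$ with the endomorphism ring of the object $R((t))$ in the idempotent-complete exact category of elementary Tate $R$-modules: the two finiteness conditions in the definition of $E(R)$ say precisely that $\varphi$ is continuous and bounded with respect to the lattices $t^{n}R[[t]]$, which is the morphism condition in that category, and $R((t))$ is a generator there (every elementary Tate $R$-module is a direct summand of a finite power of $R((t))$), so a Morita-type argument identifies $K(E(R))$ with the $K$-theory of that category. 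Next, apply the localization theorem: the exact subcategory of lattices has $K$-theory $K(R)$, while the ambient category of Tate modules admits an Eilenberg swindle --- here one uses crucially that $R((t))$, unlike $R[t,t^{-1}]$, contains an ``honest'' infinite direction $R[[t]]\cong R[[t]]^{\oplus\mathbb{N}}$, so that the ambient category is flasque and its $K$-theory vanishes --- whence the quotient category has $K$-theory $\Sigma K(R)$. Altogether $K_{n}(E(R))\cong K_{n-1}(R)$ for every $n\geq1$.

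Combining the two steps, for $n\geq2$ one obtains the chain
\[
\operatorname*{Prim}H_{n}(\operatorname{GL}_{\infty}(E(R)),\mathbb{Q})\cong K_{n}(E(R))\otimes\mathbb{Q}\cong K_{n-1}(R)\otimes\mathbb{Q}\cong\operatorname*{Prim}H_{n-1}(\operatorname{GL}_{\infty}(R),\mathbb{Q})\text{,}
\]
where the last isomorphism uses $n-1\geq1$. Each arrow is canonical, hence so is the composite.

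The main obstacle is the delooping step. One must check carefully that $E(R)$, defined concretely through continuous endomorphisms of $R((t))$ rather than as an abstract matrix ring, genuinely occurs as the endomorphism ring of a generator of a category to which the localization theorem applies, and that the passage from the $K$-theory of an exact category to that of an endomorphism ring --- with the attendant idempotent-completion and, in the non-unital localization, $H$-unitality subtleties --- does not introduce errors. The availability of the Eilenberg swindle is exactly what forces the use of the topologically completed $R((t))$ in place of $R[t,t^{-1}]$. Low-degree ($K_{0},K_{1}$) bookkeeping and the precise sense in which the delooping is an equivalence of connective spectra also require attention, but only in degrees below the range $n\geq2$ of the statement, so they do not affect the conclusion.
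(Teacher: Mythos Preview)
Your strategy is exactly the paper's: reduce to rational $K$-theory via Milnor--Moore and the rational Hurewicz theorem, identify $K(E(R))$ with the $K$-theory of $\mathsf{Tate}_{\aleph_{0}}(\mathsf{P}_{f}(R))$ via the Key Lemma (your Morita step), and then invoke Saito's $K$-theoretic delooping in place of Theorem~\ref{thm_deloop}. The chain of isomorphisms you write at the end is identical to the one in the paper's proof of Theorem~\ref{thm_body_2}.

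There is, however, a slip in your sketch of the delooping step that would derail the argument if carried out as written. The Tate category does \emph{not} admit an Eilenberg swindle and is \emph{not} flasque; if it were, its $K$-theory would vanish and you could not obtain $\Sigma K(R)$ from it. The swindle applies to $\mathsf{Ind}^{a}_{\aleph_{0}}(\mathcal{C})$ and $\mathsf{Pro}^{a}_{\aleph_{0}}(\mathcal{C})$ separately (closed under countable coproducts, resp.\ products), not to their common extension $\mathsf{Tate}^{el}_{\aleph_{0}}(\mathcal{C})$. The actual mechanism, which you correctly cite but mis-describe, uses \emph{two} localization sequences, $\mathcal{C}\hookrightarrow\mathsf{Pro}\twoheadrightarrow\mathsf{Pro}/\mathcal{C}$ and $\mathsf{Ind}\hookrightarrow\mathsf{Tate}^{el}\twoheadrightarrow\mathsf{Tate}^{el}/\mathsf{Ind}$, together with Saito's identification of the two quotients; the vanishing of $K(\mathsf{Pro})$ and $K(\mathsf{Ind})$ then forces $K(\mathsf{Tate}^{el})\simeq\Sigma K(\mathcal{C})$. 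In your phrasing, the ``subcategory of lattices'' is essentially the Pro-category, and it is \emph{its} $K$-theory that vanishes, while the ambient Tate category carries the suspended $K$-theory you want. Once this is corrected, your proof coincides with the paper's.
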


We believe that the same statement is probably also true for $J(R)$ instead of
$E(R)$, but we have no proof.

\section{\label{sect_2}The Lie algebras $\mathfrak{gl}(\infty,R)$ and
$\mathfrak{gl}_{\operatorname*{top}}(\infty,R)$}

We have recalled Feigin and Tsygan's Lie algebra $\mathfrak{gl}(\infty,R)$ in
the previous section. Now let us define $\mathfrak{gl}_{\operatorname*{top}%
}(\infty,R)$. We shall use a slightly more complicated approach than we have
used in Section \ref{sect_1} (we shall prove in Prop. \ref{prop_E} that the
definition in Section \ref{sect_1} is equivalent). Even though we only need a
Lie algebra, let us take a quick detour through a category-theoretic approach:
For every exact category $\mathcal{C}$, there is a commutative diagram of
exact functors%
\begin{equation}
\bfig\Square(0,0)[\mathcal{C}`\mathsf{Ind}^{a}_{\aleph_{0}}\mathcal{C}%
`\mathsf{Pro}^{a}_{\aleph_{0}}\mathcal{C}`\mathsf{Tate}^{el}_{\aleph_{0}%
}\mathcal{C}\text{,};```]
\efig
\label{lab8}%
\end{equation}
where $\mathsf{Ind}_{\aleph_{0}}^{a}(\mathcal{C})$, $\mathsf{Pro}_{\aleph_{0}%
}^{a}(\mathcal{C})$ refer to certain variations of the classical Ind- and
Pro-categories (the main difference is that we only allow defining
Ind-diagrams whose transition morphisms are admissible monics, resp.
Pro-diagrams whose transition morphisms are admissible epics with respect to
the exact structure of $\mathcal{C}$). The category $\mathsf{Tate}_{\aleph
_{0}}^{el}(\mathcal{C})$ can be defined as the full subcategory of
$\mathsf{Ind}_{\aleph_{0}}^{a}(\mathsf{Pro}_{\aleph_{0}}^{a}(\mathcal{C}))$
consisting of those objects which admit a Pro-subobject with an Ind-object
quotient, i.e. which can be presented as%
\[
L\hookrightarrow X\twoheadrightarrow X/L\text{,}%
\]
where $L\in\mathsf{Pro}_{\aleph_{0}}^{a}(\mathcal{C})$ and $X/L\in
\mathsf{Ind}_{\aleph_{0}}^{a}(\mathcal{C})$; such a subobject is known as a
\emph{lattice}. Here we understand this Ind- and Pro-category as canonically
embedded into $\mathsf{Ind}_{\aleph_{0}}^{a}(\mathsf{Pro}_{\aleph_{0}}%
^{a}(\mathcal{C}))$ by writing the Ind-diagram as an Ind-Pro diagram which is
constant in the Pro-direction, resp. a Pro-diagram as an Ind-Pro diagram which
is constant in the Ind-direction. We refer to \cite{MR2872533} or
\cite{MR3510209} for precise definitions. If $\mathcal{C}$ is a $k$-linear
exact category, then so is $\mathsf{Tate}_{\aleph_{0}}^{el}(\mathcal{C})$, in
a canonical fashion.

If $\mathcal{C}=\mathsf{P}_{f}(R)$ is the $k$-linear exact category of
finitely generated projective $R$-modules, there is a special object in
$\mathsf{Tate}_{\aleph_{0}}^{el}(\mathcal{C})$:
\begin{equation}
``R((t))\text{\textquotedblright}:=\coprod_{n\in\mathbb{N}}R\oplus\prod
_{n\in\mathbb{N}}R\label{lab10}%
\end{equation}
or equivalently (isomorphically)%
\begin{equation}
``R((t))\text{\textquotedblright}\cong\underset{n}{\underrightarrow
{\operatorname*{colim}}}\underset{m}{\underleftarrow{\lim}}\left.  \frac
{1}{t^{n}}R[t]\right/  t^{m}R[t]\text{,}\label{lab10d}%
\end{equation}
which might be a more suggestive way of defining an object which we can think
of as a formal incarnation of Laurent series. The former presentation makes it
clear that $``R((t))\text{\textquotedblright}$ lies in $\mathsf{Ind}_{\aleph_{0}}%
^{a}(\mathsf{Pro}_{\aleph_{0}}^{a}(\mathcal{C}))$, and since $\prod
_{n\in\mathbb{N}}R$ is visibly a lattice, $``R((t))\text{\textquotedblright}$ lies in
$\mathsf{Tate}_{\aleph_{0}}^{el}(\mathcal{C})$.

Despite the fact that these category-theoretic constructions might appear
quite fancy, perhaps even too fancy, we only need them in a very special
situation where all the categories boil down to be equivalent to categories of
projective modules:

Let $\mathsf{Tate}_{\aleph_{0}}(\mathcal{C})$ denote the idempotent completion
of the exact category $\mathsf{Tate}_{\aleph_{0}}^{el}(\mathcal{C})$.

\begin{lemma}
[Key Lemma]\label{lemma_ModuleCat}If $\mathcal{C}=\mathsf{P}_{f}(R)$ is the
$k$-linear exact category of finitely generated projective $R$-modules, then
there is a $k$-linear exact equivalence of $k$-linear exact categories
$\mathsf{Tate}_{\aleph_{0}}(\mathsf{P}_{f}(R))\cong\mathsf{P}_{f}(E(R))$,
where%
\begin{equation}
E(R):=\operatorname*{End}\nolimits_{\mathsf{Tate}_{\aleph_{0}}(\mathsf{P}%
_{f}(R))}(``R((t))\text{\textquotedblright})\text{.}\label{lab1}%
\end{equation}
Indeed, $``R((t))\text{\textquotedblright}$ is a projective generator of
$\mathsf{Tate}_{\aleph_{0}}(\mathsf{P}_{f}(R))$.
\end{lemma}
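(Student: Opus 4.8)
The plan is to establish the two assertions about $P:=\,``R((t))\text{\textquotedblright}$ — that it is projective, and that every object of $\mathcal{A}:=\mathsf{Tate}_{\aleph_{0}}(\mathsf{P}_{f}(R))$ is a direct summand of $P$ — and then to read off the equivalence from the following Morita-type principle: if $\mathcal{A}$ is an essentially small, idempotent-complete, $k$-linear exact category and $P\in\mathcal{A}$ is a projective object such that every object of $\mathcal{A}$ is a direct summand of $P^{\oplus n}$ for some $n\geq 0$, then $\mathcal{A}$ is split exact and $\operatorname{Hom}_{\mathcal{A}}(P,-)\colon\mathcal{A}\to\mathsf{P}_{f}(\operatorname{End}_{\mathcal{A}}(P))$ is a $k$-linear exact equivalence. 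This principle is essentially formal: a direct summand of the projective object $P^{\oplus n}$ is projective, so every conflation $A\hookrightarrow B\twoheadrightarrow C$ in $\mathcal{A}$ splits, making $\mathcal{A}$ split exact; the full subcategory of $\mathcal{A}$ on the objects $P^{\oplus n}$ is carried by $\operatorname{Hom}_{\mathcal{A}}(P,-)$ to the finitely generated free $\operatorname{End}_{\mathcal{A}}(P)$-modules via the matrix identity $\operatorname{Hom}_{\mathcal{A}}(P^{\oplus m},P^{\oplus n})=M_{n\times m}(\operatorname{End}_{\mathcal{A}}(P))$; and passing to idempotent completions — which recovers $\mathcal{A}$ on the left (by the generation hypothesis, using that $\mathcal{A}$ is idempotent complete) and gives $\mathsf{P}_{f}(\operatorname{End}_{\mathcal{A}}(P))$ on the right — yields the asserted equivalence, which is exact since both categories are now split exact and the functor is additive.

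The real content is the two statements about $P$, and the main obstacle is projectivity. First I would record that $\mathsf{P}_{f}(R)$ carries the split exact structure — every short exact sequence of finitely generated projective $R$-modules splits — so that admissible monics (resp.\ epics) in $\mathsf{P}_{f}(R)$ are split injections (resp.\ split surjections) with finitely generated projective cokernel (resp.\ kernel). Telescoping then shows that every object of $\mathsf{Ind}^{a}_{\aleph_{0}}\mathsf{P}_{f}(R)$ is isomorphic to a countable direct sum of finitely generated projectives, hence to a direct summand of $R^{(\mathbb{N})}=\coprod_{n\in\mathbb{N}}R$; dually, every object of $\mathsf{Pro}^{a}_{\aleph_{0}}\mathsf{P}_{f}(R)$ is a direct summand of $R^{\mathbb{N}}=\prod_{n\in\mathbb{N}}R$. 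These are the soft parts. Granting projectivity of $P$ — hence of its summands $R^{(\mathbb{N})}$, $R^{\mathbb{N}}$, and of every summand of the Ind-object $R^{(\mathbb{N})}$ — any lattice presentation $L\hookrightarrow X\twoheadrightarrow X/L$ of an elementary Tate object splits, because $X/L$, being a summand of $R^{(\mathbb{N})}$, is projective; thus $X\cong L\oplus X/L$ is a summand of $R^{\mathbb{N}}\oplus R^{(\mathbb{N})}=P$. Since every object of the idempotent completion $\mathcal{A}$ is a summand of an elementary Tate object, every object of $\mathcal{A}$ is a summand of $P$ itself; in particular all elementary Tate objects are projective, so $\mathsf{Tate}^{el}_{\aleph_{0}}(\mathsf{P}_{f}(R))$ — and therefore $\mathcal{A}$ — is split exact, and the hypotheses of the Morita-type principle hold.

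It thus all comes down to proving that $P=R^{(\mathbb{N})}\oplus R^{\mathbb{N}}$ is projective in $\mathsf{Tate}^{el}_{\aleph_{0}}(\mathsf{P}_{f}(R))$. Since sums and summands of projectives are projective, and since $R^{(\mathbb{N})}=\operatorname*{colim}_{n}R^{n}$ and $R^{\mathbb{N}}=\lim_{m}R^{m}$, this reduces to lifting a morphism out of a finitely generated projective $R$-module along an admissible epic of elementary Tate objects, compatibly along the colimit, resp.\ limit, direction. The tool I would invoke here is the representability of admissible epics in Ind-, Pro- and Tate-categories developed in \cite{MR2872533},\cite{MR3510209}: every admissible epic can be presented by a level-wise admissible epic of Ind-Pro diagrams whose comparison maps into the relevant pullbacks are again admissible epics. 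Given such a presentation, a morphism out of $R^{n}$ lifts at each level because admissible epics in $\mathsf{P}_{f}(R)$ are split, and one promotes the level-wise lifts to an honest lift by a Mittag--Leffler-type induction along the Pro-direction and a compatible-choice argument along the Ind-direction, exploiting that finitely generated projectives are compact in the Ind-direction. This lifting argument, which draws on a nontrivial amount of the structure theory of Tate categories, is the step I expect to be the main obstacle; alternatively, one may cite directly that $\mathsf{Tate}^{el}_{\aleph_{0}}(\mathcal{C})$ is split exact whenever $\mathcal{C}$ is, which makes every object projective at one stroke. Once projectivity is in hand, the implications above supply the input to the Morita-type principle and finish the proof; the identification of $E(R)=\operatorname{End}_{\mathcal{A}}(P)$ with the concrete endomorphism ring from Section \ref{sect_1} is the separate content of Proposition \ref{prop_E}.
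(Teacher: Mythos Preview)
Your argument is correct in outline and is considerably more explicit than the paper's own proof, which consists entirely of invoking \cite[Theorem~1]{MR3621099} for $\mathcal{C}=\mathsf{P}_{f}(R)$ and $n=1$ with the single generator $R$, plus a remark that the $k$-linear structure carries through. So the paper treats the lemma as a black-box corollary of an external structure theorem, while you unpack the Morita mechanism and correctly isolate the real content: projectivity of $P$, equivalently split exactness of $\mathsf{Tate}^{el}_{\aleph_{0}}$ over a split exact base. Your alternative~(b)---citing that $\mathsf{Tate}^{el}_{\aleph_{0}}(\mathcal{C})$ is split exact when $\mathcal{C}$ is---is precisely the substance of the theorem the paper cites, so at that point the two proofs converge. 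One small caution about your approach~(a): the lift for the Pro-summand $R^{\mathbb{N}}$ is indeed easy (a map out of a Pro-object factors through a finite quotient), but the inductive construction of compatible lifts for the Ind-summand $R^{(\mathbb{N})}$ presupposes that each $R^{n}\in\mathsf{P}_{f}(R)$ is already projective in the Tate category, which is itself the point at issue and is exactly where the representability of admissible epics must do real work; so~(a) is not an independent elementary shortcut but rather a sketch of how one proves~(b). With that understood, your reduction to the Morita principle plus split exactness is clean and buys a self-contained account of what the paper delegates to a reference.
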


\begin{proof}
This is \cite{MR3621099}, Theorem 1, applied for $\mathcal{C}=\mathsf{P}%
_{f}(R)$ and $n=1$. Note that the countable collection $\{S_{i}\}$ of the
theorem loc. cit. can be taken to be the single object $R$ (as a module). The
reference does not discuss the additional $k$-linear structure, but an
inspection of the proof loc. cit. easily shows that this structure is also
compatible under the given equivalence.
\end{proof}

We take the Lie algebra of the associative algebra underlying this projective
module category as our definition for a topologically completed analogue of
$\mathfrak{gl}(\infty,R)$:

\begin{definition}
Let $k$ be a field and $R$ a unital associative $k$-algebra. Define
$\mathfrak{gl}_{\operatorname*{top}}(\infty,R)$ as the Lie algebra of the
unital associative $k$-algebra $E(R)$.
\end{definition}

We need to justify how $\mathfrak{gl}_{\operatorname*{top}}(\infty,R)$
resembles the infinite matrix algebra $\mathfrak{gl}(\infty,R)$ of Fialowski
and Iohara.

\begin{remark}
Let us dwell on a bit more about the differences of $R[t,t^{-1}]$ (the
situation of Feigin--Tsygan \cite{MR705056} and Fialowski--Iohara
\cite{fioh}), in comparison to $R((t))$. For simplicitly, let us focus on
$R:=k$. Then the big advantage of $k[t,t^{-1}]$ is that we can specify an
explicit $k$-vector space basis. This is not possible for $k((t))$. However,
$k((t))$ has the advantage to be closed under dualization; noting that
$k((t))\simeq tk[[t]]\oplus k[t^{-1}]$ as a $k$-vector space, and the two
direct summands are dual to each other under taking the continuous $k$-dual,
and equipping $k[[t]]$ with the natural $t$-adic linear topology, and
$k[t^{-1}]$ with the discrete topology.
\end{remark}

Let us give a more explicit description of the $-$ quite abstract $-$
definition in Equation \ref{lab1}. After all, we prefer to be able to specify
$E(R)$ without having to rely on a category-theoretic picture:

\begin{proposition}
\label{prop_E}There is a canonical isomorphism of unital associative algebras,%
\begin{equation}
E(R)\overset{\sim}{\longrightarrow}\left\{  \varphi\in\operatorname*{End}%
\nolimits_{R}(\left.  R((t))\right.  )\left\vert
\begin{array}
[c]{c}%
\text{(1) for every }n\in\mathbb{Z}\text{ there exists some }n^{\prime}%
\in\mathbb{Z}\\
\text{such that }\varphi(t^{n}R[[t]])\subseteq t^{n^{\prime}}R[[t]]\text{,
\textit{and}}\\
\text{(2) for every }m\in\mathbb{Z}\text{ there exists some }m^{\prime}%
\in\mathbb{Z}\\
\text{such that }\varphi(t^{m^{\prime}}R[[t]])\subseteq t^{m}R[[t]]
\end{array}
\right.  \right\}  \text{.}\label{lab3a}%
\end{equation}
In particular, $\mathfrak{gl}_{\operatorname*{top}}(\infty,R)$ is the\ Lie
algebra of the right-hand side subalgebra of $\operatorname*{End}%
\nolimits_{R}(\left.  R((t))\right.  )$ under the commutator bracket.
\end{proposition}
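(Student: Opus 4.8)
The plan is to unwind the abstract definition of $E(R)$ as an endomorphism algebra in $\mathsf{Tate}_{\aleph_{0}}(\mathsf{P}_{f}(R))$ and identify it with a concrete subalgebra of $\operatorname{End}_{R}(R((t)))$. First I would recall that, by the Key Lemma, $``R((t))\text{''}$ is given by the Ind-Pro presentation in Equation \ref{lab10d}, so a morphism $``R((t))\text{''}\to ``R((t))\text{''}$ in $\mathsf{Tate}_{\aleph_{0}}^{el}(\mathcal{C})$ is, by the definition of $\operatorname{Hom}$ in an Ind-Pro category, an element of
\[
\underset{n}{\underleftarrow{\lim}}\ \underset{n'}{\underrightarrow{\operatorname*{colim}}}\ \underset{m'}{\underleftarrow{\lim}}\ \underset{m}{\underrightarrow{\operatorname*{colim}}}\ \operatorname{Hom}_{R}\!\left(\tfrac{1}{t^{n}}R[t]/t^{m}R[t],\ \tfrac{1}{t^{n'}}R[t]/t^{m'}R[t]\right).
\]
The outer limit over $n$ says: for every source lattice $t^{n}R[[t]]$ (roughly $\tfrac{1}{t^{-n}}R[t]$ completed), the colimit over $n'$ says the image lands in some target lattice $t^{n'}R[[t]]$; this is exactly condition (1). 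The inner $\lim_{m'}\operatorname*{colim}_{m}$ encodes continuity: the map is determined on each finite-level quotient, which is precisely the condition that $\varphi$ be continuous for the natural linear topology, equivalently condition (2). So the bulk of the proof is a careful bookkeeping translation of the index gymnastics of the Ind-Pro Hom-formula into the two displayed conditions.

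Concretely, I would proceed in these steps. Step 1: fix the identification $``R((t))\text{''}\cong\operatorname*{colim}_{n}\lim_{m}\,t^{-n}R[t]/t^{m}R[t]$ and note that evaluating this colimit/limit of actual $R$-modules yields the honest module $R((t))$, with the lattices $t^{\ell}R[[t]]$ corresponding to the Pro-subobjects. Step 2: spell out $\operatorname{Hom}$ in $\mathsf{Ind}^{a}_{\aleph_{0}}\mathsf{Pro}^{a}_{\aleph_{0}}(\mathsf{P}_{f}(R))$ using that $\operatorname{Hom}$ commutes with the Ind-colimit in the first variable as a limit, with the Pro-limit in the second variable as a limit, and with the remaining colimits in the expected way; this produces the fourfold (co)limit above. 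Step 3: construct the map in Equation \ref{lab3a} by sending a Tate-morphism to its effect on the underlying $R$-module $R((t))$; show it is a well-defined algebra homomorphism (composition in the Tate category is computed levelwise, matching composition of endomorphisms). Step 4: check injectivity — a Tate-morphism inducing the zero endomorphism of $R((t))$ must already be zero, because the finite-level modules $t^{-n}R[t]/t^{m}R[t]$ are \emph{recovered} as honest subquotients of $R((t))$, so no information is lost on passing to the underlying module; here one uses that $\mathsf{P}_{f}(R)$ is idempotent complete and that the relevant Mittag-Leffler/no-$\lim^{1}$ issues do not arise since the transition maps are admissible epics of projectives. Step 5: check surjectivity — given $\varphi\in\operatorname{End}_{R}(R((t)))$ satisfying (1) and (2), condition (1) lets one choose, for each source lattice, a target lattice, hence define the Ind-Pro data at each level; condition (2) guarantees compatibility across the inner $\lim_{m'}\operatorname*{colim}_{m}$, so these assemble into a genuine Tate-morphism. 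The final sentence about $\mathfrak{gl}_{\operatorname*{top}}(\infty,R)$ is then immediate from the definition, since an isomorphism of associative algebras induces an isomorphism of the associated Lie algebras under the commutator bracket.

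The main obstacle will be Step 2 together with the matching half of Step 5: correctly pinning down the order of the limits and colimits in the Ind-Pro Hom-formula and verifying that conditions (1) and (2) are exactly — not just roughly — what the two ``for every \dots there exists'' quantifiers translate into. In particular one must be careful that the existential quantifiers in (1) and (2) range correctly (the target lattice depends on the source lattice in (1), the source lattice depends on the target lattice in (2)), matching the fact that in an Ind-Pro category $\operatorname{Hom}$ turns the Ind-direction into a limit and the Pro-direction into a limit, while the ``opposite'' directions become colimits. A secondary subtlety is making sure the underlying-module functor from $\mathsf{Tate}_{\aleph_{0}}(\mathsf{P}_{f}(R))$ to $R$-modules is faithful on the relevant Hom-set, which is where the admissibility of the transition maps (admissible monics in the Ind-direction, admissible epics in the Pro-direction) and the exactness of the structure get used to rule out pathological ``phantom'' morphisms. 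Once these points are settled, everything else is routine diagram-chasing.
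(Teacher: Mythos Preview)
Your proposal is correct and follows essentially the same strategy as the paper: construct the underlying-module functor, use the fourfold (co)limit formula for $\operatorname{Hom}$ in $\mathsf{Ind}^{a}\mathsf{Pro}^{a}$ to get a well-defined injective algebra map into $\operatorname{End}_{R}(R((t)))$, and then identify the image with the subalgebra cut out by conditions (1) and (2). The only noteworthy difference is in how the image is pinned down: the paper phrases this in terms of the lattice poset, observing that the $``t^{n}R[[t]]\text{''}$ are cofinal among all lattices and invoking \cite[Lemma~1]{MR3621099} to show that any Tate endomorphism sends a lattice into a lattice (and dually), whereas you read conditions (1) and (2) directly off the $\lim\operatorname*{colim}\lim\operatorname*{colim}$ formula. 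Both are valid; the lattice viewpoint is conceptually cleaner and generalizes immediately to arbitrary Tate objects, while your direct unwinding avoids the external citation at the cost of more index bookkeeping.
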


\begin{proof}
\textit{(Step 1)} First of all, define an exact functor%
\[
G:\mathsf{Tate}_{\aleph_{0}}(\mathsf{P}_{f}(R))\longrightarrow\mathsf{Mod}%
(R)\text{,}\qquad\left.  ``R((t))\text{\textquotedblright}\right.  \longmapsto
R((t))\text{.}%
\]
As $``R((t))\text{\textquotedblright}$ is a projective generator, for defining an
exact functor it suffices to determine the object this generator is sent to,
and moreover make sure that every endomorphism of the projective generator
also defines an endomorphism in $\mathsf{Mod}(R)$. To this end, we unravel the
definition of morphisms in the category on the left:\ Recall that
$\mathsf{Tate}_{\aleph_{0}}(\mathsf{P}_{f}(R))$ is the idempotent completion
of $\mathsf{Tate}_{\aleph_{0}}^{el}(\mathsf{P}_{f}(R))$, and $\left.
``R((t))\text{\textquotedblright}\right.  $ lies in the latter, so it suffices to
understand the Hom-sets in $\mathsf{Tate}_{\aleph_{0}}^{el}(\mathsf{P}%
_{f}(R))$ itself. By definition,
\[
\mathsf{Tate}_{\aleph_{0}}^{el}(\mathsf{P}_{f}(R))\subset\mathsf{Ind}%
_{\aleph_{0}}^{a}\mathsf{Pro}_{\aleph_{0}}^{a}(\mathsf{P}_{f}(R))
\]
is a full subcategory, so this reduces to evaluating the Hom-sets of Ind-
resp. Pro-objects individually. For admissible Ind-objects of an arbitrary
exact category $\mathcal{C}$ we have
\[
\operatorname*{Hom}\nolimits_{\mathsf{Ind}_{\aleph_{0}}^{a}(\mathcal{C}%
)}(X_{\bullet},Y_{\bullet})=\underset{i}{\underleftarrow{\lim}}\underset
{j}{\underrightarrow{\operatorname*{colim}}}\operatorname*{Hom}%
\nolimits_{\mathcal{C}}(X_{i},X_{j})\text{,}%
\]
where $X_{\bullet}:I\rightarrow\mathcal{C}$ is an admissible Ind-diagram
(analogously for $Y_{\bullet}$) representing the object, with $I$ the
countable index category. Since $\mathsf{Pro}^{a}(\mathcal{C})=\mathsf{Ind}%
^{a}(\mathcal{C}^{op})^{op}$, this also describes the Hom-sets of Pro-objects.
In total, we find%
\begin{equation}
\operatorname*{Hom}\nolimits_{\mathsf{Tate}_{\aleph_{0}}^{el}(\mathsf{P}%
_{f}(R))}(X_{\bullet,\bullet},Y_{\bullet,\bullet})=\underset{I}%
{\underleftarrow{\lim}}\underset{J}{\underrightarrow{\operatorname*{colim}}%
}\underset{j}{\underleftarrow{\lim}}\underset{i}{\underrightarrow
{\operatorname*{colim}}}\operatorname*{Hom}\nolimits_{R}(X_{I,i}%
,X_{J,j})\text{.}\label{le1}%
\end{equation}
If we write%
\[
R((t))=\underset{n}{\underrightarrow{\operatorname*{colim}}}\underset
{m}{\underleftarrow{\lim}}\left.  \frac{1}{t^{n}}R[t]\right/  t^{m}R[t]
\]
in $\mathsf{Mod}(R)$, but not as an Ind-Pro object, but as a concrete colimit
and limit in the bi-complete category $\mathsf{Mod}(R)$, we see that an
element of Equation \ref{le1} for the Tate object
$X=Y=``R((t))\text{\textquotedblright}$ indeed defines an $R$-module endomorphism of
$R((t))$. Moreover, the colimits in Equation \ref{le1} are along (split)
injections, and the limits along split surjections, so the map%
\[
\operatorname*{Hom}\nolimits_{\mathsf{Tate}_{\aleph_{0}}^{el}(\mathsf{P}%
_{f}(R))}(X,Y)\longrightarrow\operatorname*{Hom}\nolimits_{R}(GX,GY)
\]
is not just well-defined, it is also injective. Thus, $G$ is an exact and
faithful functor. \textit{(Step 2)}\ The faithfulness implies that we have an
inclusion of unital associative algebras $E(R)\subseteq\operatorname*{End}%
\nolimits_{R}(\left.  R((t))\right.  )$. It remains to show that the image
agrees with the description which we give in our claim. We note that for all
$n\in\mathbb{Z}$ the subobject $``t^{n}\cdot R[[t]]\text{\textquotedblright}$, which
is a Pro-object, is indeed a lattice in $``R((t))\text{\textquotedblright}$, because%
\[
``t^{n}\cdot R[[t]]\text{\textquotedblright}\hookrightarrow``R((t))\text{\textquotedblright}%
\twoheadrightarrow``R((t))\left/  t^{n}\cdot R[[t]]\right.  \text{\textquotedblright}%
\]
and the latter is isomorphic to $``t^{n-1}\cdot R[t^{-1}]\text{\textquotedblright}$,
which is an Ind-object in turn. So all $``t^{n}\cdot R[[t]]\text{\textquotedblright}$
are lattices, and they are bi-final in the poset of lattices, meaning that
every lattice $L$ admits some $n,n^{\prime}\in\mathbb{Z}$ with%
\begin{equation}
``t^{n}\cdot R[[t]]\text{\textquotedblright}\hookrightarrow L\hookrightarrow
``t^{n^{\prime}}\cdot R[[t]]\text{\textquotedblright}\text{.}\label{lab2a}%
\end{equation}
Now suppose $\varphi\in E(R)=\operatorname*{End}\nolimits_{\mathsf{Tate}%
_{\aleph_{0}}(\mathsf{P}_{f}(R))}(``R((t))\text{\textquotedblright})$. Since
$``t^{n}\cdot R[[t]]\text{\textquotedblright}$ is a lattice in
$``R((t))\text{\textquotedblright}$, \cite[Lemma 1, (1)]{MR3621099} implies that
there exists a lattice $L$ in $``R((t))\text{\textquotedblright}$ such that
$\varphi(``t^{n}\cdot R[[t]]\text{\textquotedblright})\subseteq L$ and since the
lattices of the shape $``t^{\mathbb{Z}}\cdot R[[t]]\text{\textquotedblright}$ are
co-final among all lattices, this $L$ is contained in some $``t^{n^{\prime}%
}\cdot R[[t]]\text{\textquotedblright}$ for a suitable $n^{\prime}$, as in Equation
\ref{lab2a}. It follows that Condition (1) in Equation \ref{lab3a} is met. For
Condition (2) argue analogously, this time using \cite[Lemma 1, (2)]%
{MR3621099} instead. This proves that under the inclusion $E(R)\subseteq
\operatorname*{End}\nolimits_{R}(\left.  R((t))\right.  )$ the elements
$\varphi\in E(R)$ indeed meet the Conditions (1), (2) of our claim. It remains
to show that whenever we are given a $\varphi\in\operatorname*{End}%
\nolimits_{R}(\left.  R((t))\right.  )$ meeting Conditions (1), (2), that this
defines an endomorphism in $\operatorname*{End}\nolimits_{\mathsf{Tate}%
_{\aleph_{0}}(\mathsf{P}_{f}(R))}(``R((t))\text{\textquotedblright})$. But this is
harmless: Observing\ Equation \ref{lab10d}, Conditions (1), (2) just mean
that, up to re-indexing $n,m$, $\varphi$ induces a (possibly non-straight)
morphism in the underlying\ Ind- and Pro-category. This proves that Conditions
(1), (2) precisely determine the image of $E(R)$ inside $\operatorname*{End}%
\nolimits_{R}(\left.  R((t))\right.  )$. Thus, for the isomorphism of unital
associative algebras which we claim to exist in Equation \ref{lab3a}, we may
just take the inclusion map. We already know that this is an algebra
homomorphism, Step 1 shows that it is injective, and Step 2 that it is
surjective. The claim follows.
\end{proof}

We may also compare $J(R)$ and $E(R)$ on a more structural level:

Firstly, the $J(R)$ of \cite{fioh} comes equipped with two-sided ideals%
\begin{align}
I^{+}(R)  & :=\{(a_{i,j})\in J(R)\mid\exists B_{a}:i<B_{a}\Rightarrow
a_{i,j}=0\}\label{lab2}\\
I^{-}(R)  & :=\{(a_{i,j})\in J(R)\mid\exists B_{a}:j>B_{a}\Rightarrow
a_{i,j}=0\}\text{.}\nonumber
\end{align}
These have the following properties:%
\[
I^{0}(R):=I^{+}(R)\cap I^{-}(R)\qquad\text{and}\qquad I^{+}(R)+I^{-}%
(R)=J(R)\text{.}%
\]
See \cite[\S 1.1, loc. cit. we write $E(R)$ instead of $J(R)$, but for the
present text we prefer to follow Feigin--Tsygan's notation.]{MR3207578} for
more on this. The ideal $I^{0}(R)$ consists precisely of those matrices with
only finitely many non-zero entries, so the usual definition of a matrix trace
makes sense on it; we could call $I^{0}(R)$ the trace-class elements. In fact,
all this means that the generalized Jacobi matrices are an example of a
$1$-fold Beilinson cubical algebra in the sense of \cite[Definition
1]{MR3621099}.

The algebra $J(R)$ naturally acts on the Laurent polynomial ring $R[t,t^{-1}]
$. Write polynomials as $f=\sum_{i\in\mathbb{Z}}f_{i}t^{i}$, also denoted
$f=(f_{i})_{i}$ with $f_{i}\in R$, and let $a=(a_{i,j})$ act by $\left(
a\cdot f\right)  _{i}:=\sum_{k\in\mathbb{Z}}a_{i,k}f_{k}$. This makes $J(R)$ a
submodule of the right $R$-module endomorphisms $\operatorname*{End}%
_{R}(R[t,t^{-1}])$.

Now, let us explain how this differs from $E(R)$: Instead of Laurent
polynomials, we could also consider formal Laurent series, that is
$R((t)):=R[[t]][t^{-1}]$. Clearly, $R[t,t^{-1}]\subseteq R((t))$. As in
Equation \ref{lab1}, we define $E(R)$ to be the endomorphism algebra of this
object in this category. Then%
\begin{align*}
I^{+}(R)  & :=\{a\in\operatorname*{End}(``R((t))\text{\textquotedblright}%
)\mid\text{the image of }a\text{ is contained in a lattice}\}\text{,}\\
I^{-}(R)  & :=\{a\in\operatorname*{End}(``R((t))\text{\textquotedblright})\mid\text{a
lattice is mapped to zero under }a\}
\end{align*}
are two-sided ideals, now inside of $E(R)$ instead of $J(R)$, and again%
\[
I^{0}(R):=I^{+}(R)\cap I^{-}(R)\qquad\text{and}\qquad I^{+}(R)+I^{-}(R)=E(R)
\]
holds. We chose to pick the same notation for the ideals to stress the analogy
with Equations \ref{lab2}. In particular, we again obtain an example of a
$1$-fold Beilinson cubical algebra. All these properties are a special case of
\cite[Theorem 1]{MR3621099}, using that the projective modules $\mathsf{P}%
_{f}(R)$ form an idempotent complete split exact category.

\section{Proof}

Fix a field $k$ of arbitrary characteristic. Below, whenever we speak of
cyclic or Hochschild homology, we tacitly regard $k$ as the base field, even
if we do not repeat mentioning $k$ in the notation.

Cyclic homology is classically only defined for $k$-algebras, but thanks to
the work of Keller \cite{MR1667558}, we also have a concept of cyclic homology
for $k$-linear exact categories. In the case of $k$-algebras, we have
agreement in the sense that%
\[
HC(R)\cong HC(\mathsf{P}_{f}(R))\text{,}%
\]
where on the left-hand side $HC(R)$ denotes the ordinary definition of cyclic
homology, for example as in \cite{MR1217970}, whereas on the right-hand side
we use Keller's definition. For the proof of agreement see \cite[\S 1.5,
Theorem, (a)]{MR1667558}.

Let us briefly recall the\ Eilenberg swindle, in a quite general format, using
the language of \cite{MR3070515}:

\begin{lemma}
[Eilenberg swindle]\label{lem_EilenbergSwindle}Suppose an exact category
$\mathcal{C}$ is closed under countable products or coproducts and
$I:\operatorname*{Cat}_{\infty}^{\operatorname*{ex}}\rightarrow\mathsf{Sp}$ is
any additive invariant with values in spectra $\mathsf{Sp}$ (e.g., Hochschild
homology or cyclic homology), then $I(\mathcal{C})=0$.
\end{lemma}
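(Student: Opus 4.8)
The plan is to exhibit an explicit exact endofunctor of $\mathcal{C}$ realizing the classical Eilenberg swindle and then invoke the additivity of $I$. Concretely, suppose $\mathcal{C}$ is closed under countable coproducts (the case of countable products is dual, replacing $\coprod$ by $\prod$ and running the argument in $\mathcal{C}^{op}$, or simply observing that an additive invariant $I$ satisfies $I(\mathcal{C})\cong I(\mathcal{C}^{op})$ so the two cases are interchangeable). Define the functor $S\colon\mathcal{C}\to\mathcal{C}$ by $S(X):=\coprod_{n\in\mathbb{N}}X$, with the evident action on morphisms. This is an additive functor, and it is exact: a short exact sequence $X'\hookrightarrow X\twoheadrightarrow X''$ is sent to $\coprod_{n}X'\hookrightarrow\coprod_{n}X\twoheadrightarrow\coprod_{n}X''$, which is short exact because in an exact category closed under countable coproducts the countable coproduct functor is exact (admissible monics and admissible epics are each closed under such coproducts — this is part of what it means for the exact structure to be compatible with the coproducts).

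Next I would record the key natural isomorphism $S\cong\mathrm{id}_{\mathcal{C}}\oplus S$, coming from the decomposition $\coprod_{n\in\mathbb{N}}X\cong X\oplus\coprod_{n\geq 1}X$ together with the reindexing bijection $\mathbb{N}\cong\mathbb{N}\setminus\{0\}$. By additivity of the invariant $I$ — i.e. the fact that $I$ sends a direct sum of exact functors to the sum of the induced maps on spectra, equivalently that $I$ is a functor $\operatorname*{Cat}_{\infty}^{\operatorname*{ex}}\to\mathsf{Sp}$ which is additive in the sense stipulated — we obtain an equality of maps $I(S)=I(\mathrm{id}_{\mathcal{C}})+I(S)=\mathrm{id}_{I(\mathcal{C})}+I(S)$ in the abelian group $[I(\mathcal{C}),I(\mathcal{C})]$ of homotopy classes of self-maps of the spectrum $I(\mathcal{C})$ (or, more precisely, in $\pi_0$ of the mapping spectrum, where we may cancel). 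Subtracting $I(S)$ from both sides yields $\mathrm{id}_{I(\mathcal{C})}=0$, hence $I(\mathcal{C})\simeq 0$.

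The main obstacle is purely a matter of making precise the two ingredients that the phrase "additive invariant" is doing work for: first, that $S$ is genuinely an exact functor (so that it is a morphism in $\operatorname*{Cat}_{\infty}^{\operatorname*{ex}}$ at all), which requires knowing that the exact structure on $\mathcal{C}$ interacts well with countable coproducts — this is standard but deserves a citation to the relevant development of exact categories closed under infinite (co)products; and second, that $I$ being additive means exactly that $I(F\oplus G)\simeq I(F)+I(G)$ for exact functors $F,G$, so that the splitting $S\simeq\mathrm{id}\oplus S'$ with $S'=\coprod_{n\geq 1}(-)\cong S$ can be fed through. Once these are granted the cancellation argument is immediate. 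I would therefore phrase the proof as: (i) construct $S$ and check exactness; (ii) produce the natural isomorphism $S\cong\mathrm{id}_{\mathcal{C}}\oplus S$; (iii) apply additivity and cancel in $\pi_0$ of the endomorphism spectrum of $I(\mathcal{C})$; (iv) remark on the dual case of products, noting that an additive invariant in the sense of \cite{MR3070515} is automatically insensitive to passing to the opposite category, or else simply rerun the argument with $\prod$ in place of $\coprod$.
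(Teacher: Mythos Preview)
Your argument is correct and is essentially the same Eilenberg swindle as in the paper: the paper packages it via the exact sequence $X\hookrightarrow\coprod_{\mathbb{Z}}X\twoheadrightarrow\coprod_{\mathbb{Z}}X$ (with the shift map as the admissible epic) and invokes additivity for exact sequences of functors, whereas you use the split version $\coprod_{\mathbb{N}}X\cong X\oplus\coprod_{\mathbb{N}}X$ directly, but these are equivalent formulations of the same trick and lead to the identical cancellation $\mathrm{id}_{I(\mathcal{C})}=0$.
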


We formulate this in terms of spectra here, but since we only care about
Hochschild or cyclic homology, simplicial abelian groups would suffice.

\begin{proof}
Write $\mathcal{EC}$ to denote the exact category of exact sequences in
$\mathcal{C}$. Suppose $\mathcal{C}$ has countable coproducts. Define%
\[
E:\mathcal{C}\longrightarrow\mathcal{EC}\text{,}\quad X\mapsto
(X\hookrightarrow{\coprod\nolimits_{i\in\mathbb{Z}}}X\twoheadrightarrow
{\coprod\nolimits_{i\in\mathbb{Z}}}X)\text{.}%
\]
The admissible epic on the right is the shift-one-to-the-left functor in the
indexing of the coproducts. We can project to the left, middle or right term
of $\mathcal{EC}$, write $p_{i}:\mathcal{C}\rightarrow\mathcal{C}$ with
$i=0,1,2$ for these functors. The additivity of the invariant $I$ implies that
$p_{1}=p_{0}+p_{2}$ holds for the maps induced on $I(\mathcal{C})\rightarrow
I(\mathcal{C})$. Since $p_{1}=p_{2}$, we must have $p_{0}=0$. Since $p_{0}$ is
induced from the identity functor, $p_{0}$ must also be the identity, and thus
$I(\mathcal{C})=0$. The argument in the case of $\mathcal{C}$ having countable
products is analogous.
\end{proof}

That cyclic homology and Hochschild homology are additive invariants in the
sense of the above lemma, was proven by Keller \cite[\S 1.1.2, Theorem]%
{MR1667558}.

If $\mathcal{C}$ is an idempotent complete exact category and $\mathcal{C}%
\hookrightarrow\mathcal{D}$ a right $s$-filtering inclusion as a full
subcategory of an exact category $\mathcal{D}$, then%
\begin{equation}
D^{b}(\mathcal{C})\hookrightarrow D^{b}(\mathcal{D})\twoheadrightarrow
D^{b}(\mathcal{D}/\mathcal{C})\label{loa1}%
\end{equation}
is an exact sequence of triangulated categories by work of Schlichting, see
\cite[Prop. 2.6]{MR2079996}. From this one can deduce the following variant of
a theorem of\ Keller:

\begin{theorem}
[Keller's Localization Theorem]\label{Thm_KellersLocalizationTheorem}Let
$\mathcal{C}$ be an exact category and $\mathcal{C}\hookrightarrow\mathcal{D}$
a right $s$-filtering inclusion as a full subcategory of an exact category
$\mathcal{D}$. Then%
\[
HC(\mathcal{C})\longrightarrow HC(\mathcal{D})\longrightarrow HC(\mathcal{D}%
/\mathcal{C})
\]
is a fiber sequence in cyclic homology. The analogous statement for Hochschild
homology is also true.
\end{theorem}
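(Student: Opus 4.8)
The plan is to deduce Keller's Localization Theorem from the exact sequence of triangulated categories in Equation \eqref{loa1} together with Keller's agreement results identifying $HC$ (and $HH$) of an exact category with the corresponding invariant of its bounded derived category. First I would invoke the hypotheses: $\mathcal{C} \hookrightarrow \mathcal{D}$ is a right $s$-filtering inclusion of a full exact subcategory, and — as recorded just before the statement — this is precisely the situation in which Schlichting's \cite[Prop. 2.6]{MR2079996} applies, yielding that
\[
D^{b}(\mathcal{C}) \hookrightarrow D^{b}(\mathcal{D}) \twoheadrightarrow D^{b}(\mathcal{D}/\mathcal{C})
\]
is an exact sequence of triangulated categories, i.e. $D^{b}(\mathcal{C})$ is (up to direct summands) the kernel of a Verdier quotient functor with quotient $D^{b}(\mathcal{D}/\mathcal{C})$. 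Strictly speaking one should pass to idempotent completions (or equivalently to the associated small stable $\infty$-categories / dg categories) so that the quotient is taken in the correct sense; since $HC$ and $HH$ are insensitive to idempotent completion, this causes no harm, but it is worth a sentence.

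Next I would feed this localization sequence of triangulated (equivalently, stable $\infty$- or dg) categories into the fundamental property of cyclic homology established by Keller in \cite{MR1667558}: $HC$, viewed as a functor on (dg or stable $\infty$-) categories, sends exact sequences to fiber sequences of spectra (this is the statement quoted in the excerpt as \cite[\S 1.1.2, Theorem]{MR1667558}, together with \S 1.5 there for the exact-category agreement). Concretely, one gets a fiber sequence
\[
HC(D^{b}(\mathcal{C})) \longrightarrow HC(D^{b}(\mathcal{D})) \longrightarrow HC(D^{b}(\mathcal{D}/\mathcal{C})).
\]
Then I would rewrite each term using Keller's agreement isomorphism $HC(\mathcal{A}) \cong HC(D^{b}(\mathcal{A}))$ (resp. $HC(\mathcal{A}) \cong HC(\mathsf{P}_f(R))$ in the affine case, as already cited in the excerpt) for an exact category $\mathcal{A}$, applied to $\mathcal{A} = \mathcal{C}$, $\mathcal{D}$, and $\mathcal{D}/\mathcal{C}$. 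Substituting these identifications into the fiber sequence produces exactly
\[
HC(\mathcal{C}) \longrightarrow HC(\mathcal{D}) \longrightarrow HC(\mathcal{D}/\mathcal{C}),
\]
and one checks that the maps are the functorially induced ones, so this is the asserted fiber sequence. The Hochschild case is verbatim the same, replacing $HC$ by $HH$ throughout, since $HH$ enjoys the identical localization and agreement properties in \cite{MR1667558}.

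The main obstacle — really the only subtle point — is matching up the notion of quotient: the Verdier quotient $D^b(\mathcal{D})/D^b(\mathcal{C})$ appearing in Schlichting's sequence must be identified with $D^b(\mathcal{D}/\mathcal{C})$, where $\mathcal{D}/\mathcal{C}$ is the quotient exact category, and this identification (again up to idempotent completion) is part of \cite[Prop. 2.6]{MR2079996}; one must be careful that the right $s$-filtering hypothesis is exactly what guarantees $\mathcal{D}/\mathcal{C}$ is a well-behaved exact category and that the comparison functor $D^b(\mathcal{D})/D^b(\mathcal{C}) \to D^b(\mathcal{D}/\mathcal{C})$ is an equivalence after idempotent completion. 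Once that is in hand, everything else is a formal concatenation of Keller's theorems, and no spectral sequence or explicit chain-level computation is needed. I would therefore keep the proof short: cite Schlichting for \eqref{loa1} and the quotient identification, cite Keller for "localization sequences go to fiber sequences" and for the exact-category agreement, and conclude.
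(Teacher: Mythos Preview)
Your approach is essentially the paper's: reduce to an exact sequence of bounded derived categories via Schlichting, then feed this into Keller's localization and agreement theorems. The one place where you are too quick is the treatment of idempotent completion. Equation~\eqref{loa1} (Schlichting's Prop.~2.6) is recorded in the paper under the hypothesis that $\mathcal{C}$ is \emph{idempotent complete}; the theorem you are proving, however, makes no such assumption on $\mathcal{C}$. You write that passing to idempotent completions ``causes no harm'' and ``is worth a sentence,'' but this is precisely where the bulk of the paper's proof is spent. The issue is not that $HC$ changes under idempotent completion---it does not---but rather that the right $s$-filtering hypothesis on $\mathcal{C}\hookrightarrow\mathcal{D}$ does not obviously transfer to $\mathcal{C}^{ic}\hookrightarrow\mathcal{D}^{ic}$, nor is it immediate that the exact-category quotient $\mathcal{D}^{ic}/\mathcal{C}^{ic}$ agrees with $\mathcal{D}/\mathcal{C}$.

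The paper handles this via Schlichting's auxiliary construction \cite[Lemma~1.20]{MR2079996}: an intermediate exact category $\widetilde{\mathcal{D}}^{\mathcal{C}}$ sitting co-finally between $\mathcal{D}$ and $\mathcal{D}^{ic}$, into which $\mathcal{C}^{ic}$ embeds as a right $s$-filtering subcategory, and satisfying $\widetilde{\mathcal{D}}^{\mathcal{C}}/\mathcal{C}^{ic}\simeq\mathcal{D}/\mathcal{C}$. One then applies the idempotent-complete case of \eqref{loa1} to $\mathcal{C}^{ic}\hookrightarrow\widetilde{\mathcal{D}}^{\mathcal{C}}$, obtains the fiber sequence on $HC$, and uses Keller's invariance under co-final inclusions (``equivalence up to factors'') together with Balmer--Schlichting's $D^{b}(\mathcal{C}^{ic})\simeq D^{b}(\mathcal{C})^{ic}$ to replace each term by the desired one. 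So your outline is correct, but the ``one sentence'' you allot to idempotent completion should in fact cite \cite[Lemma~1.20]{MR2079996} and carry out this detour explicitly.
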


\begin{proof}
Let us provide some details to translate this formulation into the one of
Keller's paper. If $\mathcal{C}$ is idempotent complete, the claim literally
follows from Schlichting's result, Equation \ref{loa1}, and Keller's
\cite[\S 1.5, Theorem]{MR1667558}. If $\mathcal{C}$ is not idempotent
complete, we need to work a little harder, but the necessary tools are all
available thanks to \cite{MR2079996}: Instead of Equation \ref{loa1},
Schlichting also provides a slightly more general variant: If $\mathcal{C}$ is
not necessarily idempotent complete, he constructs an exact category
$\widetilde{\mathcal{D}}^{\mathcal{C}}$ (cf. \cite[Lemma 1.20]{MR2079996})
such that%
\[
\mathcal{D}\hookrightarrow\widetilde{\mathcal{D}}^{\mathcal{C}}\hookrightarrow
\mathcal{D}^{ic}\text{,}%
\]
with (1) $\widetilde{\mathcal{D}}^{\mathcal{C}}\hookrightarrow\mathcal{D}%
^{ic}$ an extension-closed full subcategory, and (2) $\mathcal{D}%
\hookrightarrow\widetilde{\mathcal{D}}^{\mathcal{C}}$ co-final (a.k.a.
`factor-dense' in Keller's language \cite[\S 1.5]{MR1667558}), (3) the right
$s$-filtering embedding $\mathcal{C}\hookrightarrow\mathcal{D}$, inducing
$\mathcal{C}^{ic}\hookrightarrow\mathcal{D}^{ic}$ by $2$-functoriality,
factors over a right $s$-filtering embedding $\mathcal{C}^{ic}\hookrightarrow
\widetilde{\mathcal{D}}^{\mathcal{C}}$ and finally, (4), there is an exact
equivalence of exact categories $\mathcal{D}/\mathcal{C}\overset{\sim
}{\longrightarrow}\widetilde{\mathcal{D}}^{\mathcal{C}}/\mathcal{C}^{ic}$. See
Schlichting's paper \cite[Lemma 1.20]{MR2079996}. We can use this tool now and
proceed as follows: From the assumptions of the theorem as our input, we begin
with Schlichting's construct $\mathcal{C}^{ic}\hookrightarrow\widetilde
{\mathcal{D}}^{\mathcal{C}}\twoheadrightarrow\widetilde{\mathcal{D}%
}^{\mathcal{C}}/\mathcal{C}^{ic}$, so that Equation \ref{loa1} yields the
exact sequence of triangulated categories $D^{b}(\mathcal{C}^{ic}%
)\hookrightarrow D^{b}(\widetilde{\mathcal{D}}^{\mathcal{C}}%
)\twoheadrightarrow D^{b}(\widetilde{\mathcal{D}}^{\mathcal{C}}/\mathcal{C}%
^{ic})$. Keller's original formulation in \cite{MR1667558} takes such a
sequence as its input, giving the fiber sequence%
\[
HC(\mathcal{C}^{ic})\longrightarrow HC(\widetilde{\mathcal{D}}^{\mathcal{C}%
})\longrightarrow HC(\widetilde{\mathcal{D}}^{\mathcal{C}}/\mathcal{C}%
^{ic})\text{.}%
\]
But Keller's cyclic homology is invariant up to exact equivalence up to
factors/co-finally (see \cite[\S 1.5, Theorem, (b)]{MR1667558}), so%
\[
HC(\widetilde{\mathcal{D}}^{\mathcal{C}})\cong HC(\mathcal{D})\qquad
\text{and}\qquad HC(\widetilde{\mathcal{D}}^{\mathcal{C}}/\mathcal{C}%
^{ic})\cong HC(\mathcal{D}/\mathcal{C})\text{.}%
\]
Finally, $\mathcal{C}\hookrightarrow\mathcal{C}^{ic}$ is co-final, and the
idempotent completion of an exact category commutes with the idempotent
completion of its bounded derived category, $D^{b}(\mathcal{C}^{ic}%
)=D^{b}(\mathcal{C})^{ic}$, by Balmer and\ Schlichting \cite[Corollary
2.12]{MR1813503}, so that $D^{b}(\mathcal{C})\rightarrow D^{b}(\mathcal{C}%
^{ic})$ is also co-final and thus an equivalence up to factors itself.
\end{proof}

Based on the Eilenberg swindle, we obtain the fundamental delooping property
of Tate categories. This transports an idea of Sho Saito from algebraic
$K$-theory into the setup of the present note:

\begin{theorem}
\label{thm_deloop}Let $k$ be any field. Suppose $\mathcal{C}$ is an idempotent
complete $k$-linear exact category. Then there are canonical isomorphisms%
\[
HC_{\bullet}(\mathsf{Tate}_{\aleph_{0}}(\mathcal{C}))\cong HC_{\bullet
-1}(\mathcal{C})\qquad\text{and}\qquad HH_{\bullet}(\mathsf{Tate}_{\aleph_{0}%
}(\mathcal{C}))\cong HH_{\bullet-1}(\mathcal{C})\text{.}%
\]

\end{theorem}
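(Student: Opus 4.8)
The plan is to deloop $HC$ (and, by the identical argument, $HH$) by two successive applications of Keller's Localization Theorem (Theorem~\ref{Thm_KellersLocalizationTheorem}) combined with the Eilenberg swindle (Lemma~\ref{lem_EilenbergSwindle}), following Saito's delooping argument in algebraic $K$-theory. First, a harmless reduction: the inclusion $\mathsf{Tate}_{\aleph_{0}}^{el}(\mathcal{C})\hookrightarrow\mathsf{Tate}_{\aleph_{0}}(\mathcal{C})$ of an exact category into its idempotent completion is co-final, so Keller's invariance result \cite[\S 1.5, Theorem, (b)]{MR1667558} (the same input already used in the proof of Theorem~\ref{Thm_KellersLocalizationTheorem}) identifies $HC(\mathsf{Tate}_{\aleph_{0}}^{el}(\mathcal{C}))\cong HC(\mathsf{Tate}_{\aleph_{0}}(\mathcal{C}))$, and likewise for $HH$; so it suffices to compute the invariants of the elementary Tate category $\mathsf{Tate}_{\aleph_{0}}^{el}(\mathcal{C})$.

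The first localization removes the Pro-direction. The inclusion $\mathsf{Pro}_{\aleph_{0}}^{a}(\mathcal{C})\hookrightarrow\mathsf{Tate}_{\aleph_{0}}^{el}(\mathcal{C})$ is an $s$-filtering inclusion of a full subcategory (of the handedness demanded by Theorem~\ref{Thm_KellersLocalizationTheorem}, passing to opposite categories where needed and using $HC(\mathcal{A}^{op})\cong HC(\mathcal{A})$, $HH(\mathcal{A}^{op})\cong HH(\mathcal{A})$), and its Schlichting quotient is exactly equivalent to $\mathsf{Ind}_{\aleph_{0}}^{a}(\mathcal{C})/\mathcal{C}$: this is where the lattice calculus for elementary Tate objects from \cite{MR3510209} (resp.\ \cite{MR2872533}) enters, the point being that every Tate object becomes an Ind-object after quotienting by Pro-objects (pick a lattice), while an Ind-object is killed in the quotient precisely when it lies in $\mathsf{Ind}_{\aleph_{0}}^{a}(\mathcal{C})\cap\mathsf{Pro}_{\aleph_{0}}^{a}(\mathcal{C})=\mathcal{C}$, the last equality using that $\mathcal{C}$ is idempotent complete. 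Theorem~\ref{Thm_KellersLocalizationTheorem} then produces a fiber sequence
\[
HC(\mathsf{Pro}_{\aleph_{0}}^{a}(\mathcal{C}))\longrightarrow HC(\mathsf{Tate}_{\aleph_{0}}^{el}(\mathcal{C}))\longrightarrow HC(\mathsf{Ind}_{\aleph_{0}}^{a}(\mathcal{C})/\mathcal{C})\text{,}
\]
and since $\mathsf{Pro}_{\aleph_{0}}^{a}(\mathcal{C})$ is an exact category closed under countable products, Lemma~\ref{lem_EilenbergSwindle} gives $HC(\mathsf{Pro}_{\aleph_{0}}^{a}(\mathcal{C}))=0$, whence $HC(\mathsf{Tate}_{\aleph_{0}}^{el}(\mathcal{C}))\cong HC(\mathsf{Ind}_{\aleph_{0}}^{a}(\mathcal{C})/\mathcal{C})$.

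The second localization removes the Ind-direction and supplies the degree shift. By construction $\mathcal{C}\hookrightarrow\mathsf{Ind}_{\aleph_{0}}^{a}(\mathcal{C})$ is an $s$-filtering inclusion with quotient $\mathsf{Ind}_{\aleph_{0}}^{a}(\mathcal{C})/\mathcal{C}$, so Theorem~\ref{Thm_KellersLocalizationTheorem} yields the fiber sequence $HC(\mathcal{C})\to HC(\mathsf{Ind}_{\aleph_{0}}^{a}(\mathcal{C}))\to HC(\mathsf{Ind}_{\aleph_{0}}^{a}(\mathcal{C})/\mathcal{C})$. As $\mathsf{Ind}_{\aleph_{0}}^{a}(\mathcal{C})$ is closed under countable coproducts, Lemma~\ref{lem_EilenbergSwindle} makes the middle term vanish, so the connecting map is an isomorphism, giving $HC_{\bullet}(\mathsf{Ind}_{\aleph_{0}}^{a}(\mathcal{C})/\mathcal{C})\cong HC_{\bullet-1}(\mathcal{C})$. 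Chaining this with the first localization and the preliminary reduction yields $HC_{\bullet}(\mathsf{Tate}_{\aleph_{0}}(\mathcal{C}))\cong HC_{\bullet-1}(\mathcal{C})$, and running the same three steps verbatim with $HH$ in place of $HC$ gives the Hochschild statement. I expect the only genuinely non-formal step to be the one buried in the first localization: checking that $\mathsf{Pro}_{\aleph_{0}}^{a}(\mathcal{C})\hookrightarrow\mathsf{Tate}_{\aleph_{0}}^{el}(\mathcal{C})$ (and $\mathcal{C}\hookrightarrow\mathsf{Ind}_{\aleph_{0}}^{a}(\mathcal{C})$) are $s$-filtering of the correct handedness, and identifying the quotient $\mathsf{Tate}_{\aleph_{0}}^{el}(\mathcal{C})/\mathsf{Pro}_{\aleph_{0}}^{a}(\mathcal{C})$ with $\mathsf{Ind}_{\aleph_{0}}^{a}(\mathcal{C})/\mathcal{C}$; both rely on the structure theory of Tate categories in \cite{MR3510209}, and everything else is a formal manipulation of the fiber sequences.
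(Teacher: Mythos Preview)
Your argument is correct and is substantively the same as the paper's: both are Saito's delooping argument, transplanted from $K$-theory to $HC$/$HH$ via Keller's Localization Theorem and the Eilenberg swindle. The only difference is organizational. The paper uses the \emph{right} $s$-filtering inclusions $\mathcal{C}\hookrightarrow\mathsf{Pro}_{\aleph_{0}}^{a}(\mathcal{C})$ and $\mathsf{Ind}_{\aleph_{0}}^{a}(\mathcal{C})\hookrightarrow\mathsf{Tate}_{\aleph_{0}}^{el}(\mathcal{C})$ (citing \cite[Theorem~4.2, Remark~5.3.5]{MR3510209}), stacks the two resulting fiber sequences into a commutative ladder, invokes Saito's equivalence $\mathsf{Tate}_{\aleph_{0}}^{el}(\mathcal{C})/\mathsf{Ind}_{\aleph_{0}}^{a}(\mathcal{C})\simeq\mathsf{Pro}_{\aleph_{0}}^{a}(\mathcal{C})/\mathcal{C}$ to make the right square commute up to equivalence, and then reads off the loop-space square after the Eilenberg swindle kills the Ind- and Pro-corners. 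You instead use the dual pair of inclusions $\mathsf{Pro}_{\aleph_{0}}^{a}(\mathcal{C})\hookrightarrow\mathsf{Tate}_{\aleph_{0}}^{el}(\mathcal{C})$ and $\mathcal{C}\hookrightarrow\mathsf{Ind}_{\aleph_{0}}^{a}(\mathcal{C})$, which are \emph{left} $s$-filtering, and handle this by passing to opposite categories; your quotient identification $\mathsf{Tate}_{\aleph_{0}}^{el}(\mathcal{C})/\mathsf{Pro}_{\aleph_{0}}^{a}(\mathcal{C})\simeq\mathsf{Ind}_{\aleph_{0}}^{a}(\mathcal{C})/\mathcal{C}$ is exactly the $\mathcal{C}\mapsto\mathcal{C}^{op}$ dual of Saito's. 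The paper's choice is marginally cleaner in that Theorem~\ref{Thm_KellersLocalizationTheorem} applies without the op-detour, and the bicartesian-square packaging makes the delooping visible in one picture; your sequential presentation is equally valid and perhaps more explicit about where the degree shift originates.
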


\begin{proof}
The delooping property of (countably indexed) Tate categories \textit{for
algebraic }$K$\textit{-theory} was first conjectured by Kapranov in an
unpublished letter to Brylinski, later first steps to a proof were taken in
Previdi's thesis, and eventually proven in full generality by Sho Saito
\cite{MR3317759}. By inspecting Sho Saito's proof, one quickly realizes that
it can be adapted to prove the same delooping property for cyclic homology and
Hochschild homology. Let us explain this: Using that $\mathcal{C}%
\hookrightarrow\mathsf{Pro}_{\aleph_{0}}^{a}(\mathcal{C})$ is right
$s$-filtering (see loc. cit., or \cite[Theorem 4.2]{MR3510209}), and that
$\mathsf{Ind}_{\aleph_{0}}^{a}(\mathcal{C})\hookrightarrow\mathsf{Tate}%
_{\aleph_{0}}^{el}(\mathcal{C})$ is right $s$-filtering (loc. cit., or
\cite[Remark 5.3.5]{MR3510209}), we get the homotopy commutative diagram%
\begin{equation}%
\xymatrix{ {HC(\mathcal{C})} \ar[rr] \ar[d] && {HC(\mathsf{Pro}_{\aleph_{0}%
}^{a}(\mathcal{C}))} \ar[rr] \ar[d] && {HC(\mathsf{Pro}_{\aleph_{0}}%
^{a}(\mathcal{C})/\mathcal{C})} \ar[d] \\
{HC(\mathsf{Ind}_{\aleph_{0}}^{a}(\mathcal{C}))} \ar[rr] && {HC(\mathsf
{Tate}_{\aleph_{0}}^{el}(\mathcal{C}))} \ar[rr] && {HC(\mathsf{Tate}%
_{\aleph_{0}}^{el}(\mathcal{C})/\mathsf{Ind}_{\aleph_{0}}^{a}(\mathcal{C}))}
}%
\label{lmai_30}%
\end{equation}
where both rows are fiber sequences. This follows from Keller's Localization
Theorem,\ in the special formulation we have given above as Theorem
\ref{Thm_KellersLocalizationTheorem}. The downward arrows are induced from the
natural exact functors between the respective categories. Adapting Saito's
idea, the right-hand side downward map stems from an exact equivalence,%
\[
\left.  \mathsf{Tate}_{\aleph_{0}}^{el}(\mathcal{C})/\mathsf{Ind}_{\aleph_{0}%
}^{a}(\mathcal{C})\right.  \overset{\sim}{\longrightarrow}\left.
\mathsf{Pro}_{\aleph_{0}}^{a}(\mathcal{C})/\mathcal{C}\right.
\]
(see Saito's paper for a proof). Thus, it is an equivalence in cyclic homology
and Hochschild homology. Thus, the square on the left is homotopy
bi-cartesian. Next, note that%
\[
HC(\mathsf{Ind}_{\aleph_{0}}^{a}(\mathcal{C}))=0\qquad\text{and}\qquad
HC(\mathsf{Pro}_{\aleph_{0}}^{a}(\mathcal{C}))=0
\]
by the Eilenberg swindle, Lemma \ref{lem_EilenbergSwindle}. It is a rather
easy exercise to show that the Ind-category is closed under countable
coproducts, and the Pro-category under countable products. Thus, the homotopy
bi-cartesian square turns into the classical loop space square,%
\[
\bfig\Square(0,0)[HC(\mathcal{C})`\ast`\ast`HC(\mathsf{Tate}^{el}_{\aleph_{0}%
}\mathcal{C})\text{.};```]
\efig
\]
We obtain the equivalence $HC(\mathcal{C})\overset{\sim}{\rightarrow}\Omega
HC(\mathsf{Tate}_{\aleph_{0}}^{el}(\mathcal{C}))$, and analogously for
Hochschild homology\footnote{The same argument works for non-connective
$K$-theory, this is the original argument of Saito in \cite{MR3317759}, and
finally all of this generalizes to any localizing invariant in the sense of
Blumberg, Gepner, Tabuada \cite{MR3070515}.}. It remains to go to the
idempotent completion on the right; we copy the same argument as in the end of
the proof of Theorem \ref{Thm_KellersLocalizationTheorem}: For every exact
category $\mathcal{D}$ the idempotent completion $\mathcal{D}\hookrightarrow
\mathcal{D}^{ic}$ induces $D^{b}(\mathcal{D}^{ic})\hookrightarrow
D^{b}(\mathcal{D})^{ic}$, being co-final and an equivalence up to factors.
Thus, applied to $\mathcal{D}:=\mathsf{Tate}_{\aleph_{0}}^{el}(\mathcal{C})$,
we get $HC(\mathsf{Tate}_{\aleph_{0}}^{el}(\mathcal{C}))\cong HC(\mathsf{Tate}%
_{\aleph_{0}}(\mathcal{C}))$.
\end{proof}

Let us make the connection between this and the work of Fialowski and\ Iohara.

\begin{corollary}
\label{cor_1}Let $k$ be a field and $R$ a unital associative $k$-algebra. Then
there are canonical isomorphisms

\begin{enumerate}
\item $HC_{\bullet}(E(R))\cong HC_{\bullet-1}(R)$,

\item $HH_{\bullet}(E(R))\cong HH_{\bullet-1}(R)$.
\end{enumerate}
\end{corollary}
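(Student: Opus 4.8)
The plan is to chain together the three ingredients already in place: the Key Lemma, Keller's agreement theorem for cyclic and Hochschild homology of $k$-linear exact categories, and the delooping theorem. First I would apply the Key Lemma (Lemma~\ref{lemma_ModuleCat}) with $\mathcal{C} = \mathsf{P}_{f}(R)$; it supplies a $k$-linear exact equivalence $\mathsf{Tate}_{\aleph_{0}}(\mathsf{P}_{f}(R)) \cong \mathsf{P}_{f}(E(R))$. Next, by Keller's comparison of his definition of cyclic homology with the classical one, $HC_{\bullet}(E(R)) \cong HC_{\bullet}(\mathsf{P}_{f}(E(R)))$ and $HC_{\bullet-1}(R) \cong HC_{\bullet-1}(\mathsf{P}_{f}(R))$, and the same two identities with $HH$ in place of $HC$.

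After these translations the claim reads $HC_{\bullet}(\mathsf{Tate}_{\aleph_{0}}(\mathsf{P}_{f}(R))) \cong HC_{\bullet-1}(\mathsf{P}_{f}(R))$, together with its Hochschild analogue, and this is precisely Theorem~\ref{thm_deloop} applied to $\mathcal{C} = \mathsf{P}_{f}(R)$. So the only thing left to check is that $\mathsf{P}_{f}(R)$ satisfies the hypotheses of that theorem: it is a $k$-linear exact category with its evident exact structure, and it is idempotent complete, since a direct summand of a finitely generated projective module is again finitely generated projective. Composing all the isomorphisms yields the corollary, and since every map in the chain is canonical, so is the composite.

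I do not anticipate a genuine obstacle here, as all the substantive work sits in the preceding sections. The two small points worth a sentence are: first, that the equivalence of the Key Lemma respects the relevant exact and $k$-linear structures, which was already addressed in the proof of Lemma~\ref{lemma_ModuleCat}; and second, that the degree shift is tracked correctly, the two passages through Keller's agreement theorem introducing no shift and Theorem~\ref{thm_deloop} supplying the single shift by one. In short, what little there is to do is bookkeeping rather than mathematics.
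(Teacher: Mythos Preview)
Your proposal is correct and follows essentially the same route as the paper: chain Keller's agreement $HC_{\bullet}(E(R))\cong HC_{\bullet}(\mathsf{P}_{f}(E(R)))$, the Key Lemma $\mathsf{P}_{f}(E(R))\cong\mathsf{Tate}_{\aleph_{0}}(\mathsf{P}_{f}(R))$, the delooping Theorem~\ref{thm_deloop}, and Keller's agreement again for $R$. The only addition you make is the explicit remark that $\mathsf{P}_{f}(R)$ is idempotent complete, which the paper leaves tacit.
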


We observe that we have proven the analogue of the delooping result of
Fialowski and\ Iohara for $E$ instead of $J$.

\begin{proof}
To this end, we use the category-level delooping theorem in the special case
of rings, i.e. $\mathcal{C}=\mathsf{P}_{f}(R)$. Then%
\[
HC_{\bullet}(E(R))\cong HC_{\bullet}(\mathsf{Tate}_{\aleph_{0}}(\mathsf{P}%
_{f}(R)))\cong HC_{\bullet-1}(\mathsf{P}_{f}(R)))\cong HC_{\bullet
-1}(R)\text{.}%
\]
Here the first isomorphism stems from Lemma \ref{lemma_ModuleCat}, the second
from the delooping property of Tate categories, Theorem \ref{thm_deloop}, and
the last from the agreement of cyclic homology of exact categories of modules
with the classical definition for $k$-algebras. The same computation holds for
Hochschild homology.
\end{proof}

Let us stress how different this proof is from the one given in \cite{fioh}.
While loc. cit. uses the degeneration of a Hochschild--Serre type spectral
sequence, based on some explicit computations of terms, the main technical
point in our approach is Keller's Localization Theorem, along with a few
category-theoretic properties. In an at best vaguely precise comparison, the
r\^{o}le of the homological vanishing statements in \cite{fioh} appears to be
replaced by the\ Eilenberg swindle.

Finally, and for the first time in this note, let us restrict the
characteristic of the base field to zero. Then the Loday--Quillen--Tsygan
(LQT) theorem applies, telling us that for any unital associative $k$-algebra
$R$, we have%
\[
\operatorname*{Prim}H_{\bullet}(\mathfrak{gl}_{\infty}(R))\cong HC_{\bullet
-1}(R)\text{.}%
\]

\begin{theorem}
\label{thm_body_1}There is a canonical isomorphism of the primitive parts
$\operatorname*{Prim}H_{\bullet}(\mathfrak{gl}(\infty,R),k)\cong%
\operatorname*{Prim}H_{\bullet}(\mathfrak{gl}_{\infty}(R),k)[-1]$.
\end{theorem}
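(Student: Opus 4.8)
The plan is to chain together the Loday--Quillen--Tsygan theorem, the delooping corollary for $E(R)$, and the theorem of Fialowski--Iohara's shape that we have already established. More precisely, by the LQT theorem recalled just above, applied to the associative algebra $E(R)$, we have a canonical isomorphism
\[
\operatorname*{Prim}H_{\bullet}(\mathfrak{gl}_{\infty}(E(R)),k)\cong HC_{\bullet-1}(E(R))\text{.}
\]
Now invoke Corollary \ref{cor_1}, which gives $HC_{\bullet-1}(E(R))\cong HC_{\bullet-2}(R)$, and then LQT again, this time for $R$ itself, to identify $HC_{\bullet-2}(R)\cong\operatorname*{Prim}H_{\bullet-1}(\mathfrak{gl}_{\infty}(R),k)$. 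Stringing these together yields
\[
\operatorname*{Prim}H_{\bullet}(\mathfrak{gl}_{\infty}(E(R)),k)\cong\operatorname*{Prim}H_{\bullet}(\mathfrak{gl}_{\infty}(R),k)[-1]\text{.}
\]

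The remaining gap is that the left-hand side of the desired statement speaks of $\mathfrak{gl}(\infty,R)$ (or, in the topologically completed variant, $\mathfrak{gl}_{\operatorname*{top}}(\infty,R)$), not of $\mathfrak{gl}_{\infty}(E(R))$. So the key step is to identify the primitive Lie homology of the full matrix-type algebra with that of its ``finite'' colimit counterpart. For the topologically completed variant this is immediate: by definition $\mathfrak{gl}_{\operatorname*{top}}(\infty,R)$ is the Lie algebra of $E(R)$, and there is a standard fact (Feigin--Tsygan, or the stability argument underlying LQT) that for any unital associative $k$-algebra $A$ the inclusion $\mathfrak{gl}_{\infty}(A)\hookrightarrow\mathfrak{gl}(A)$ — i.e.\ the colimit of the finite matrix Lie algebras into the Lie algebra of all of $A$ viewed through the trace-class ideal $I^{0}$ — induces an isomorphism on primitive parts of Lie homology. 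Applying this with $A=E(R)$ closes the argument, since the trace-class ideal $I^{0}(R)$ inside $E(R)$ is exactly the algebra of finite matrices, and $\mathfrak{gl}_{\infty}(E(R))$ is built from it in the same way.

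The main obstacle I anticipate is this last comparison between $\mathfrak{gl}_{\operatorname*{top}}(\infty,R)$ and $\mathfrak{gl}_{\infty}(E(R))$: one must be careful that LQT is being applied to the correct Lie algebra. The cleanest route is to observe that $E(R)$, equipped with its ideal filtration $I^{0}(R)\subseteq I^{+}(R),I^{-}(R)\subseteq E(R)$, is a $1$-fold Beilinson cubical algebra in which $I^{0}(R)$ consists of the finite matrices; then the relative LQT statement identifies $\operatorname*{Prim}H_{\bullet}(\mathfrak{gl}(I^{0}(R)),k)$ with $HC_{\bullet-1}(I^{0}(R))$, and $HC_{\bullet}(I^{0}(R))\cong HC_{\bullet}(E(R))$ since $E(R)/I^{0}(R)$ is built from Ind- and Pro-categories killed by the Eilenberg swindle (Lemma \ref{lem_EilenbergSwindle}). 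Thus one never needs to handle $\mathfrak{gl}(\infty,R)$ directly; one works entirely with $E(R)$ and its trace-class ideal, and the transition to $\mathfrak{gl}(\infty,R)$ at the end is by the remark in the introduction that the topological completion does not change the right-hand side — equivalently, by appealing to the Fialowski--Iohara computation for the $J(R)$ side and observing both sides land on the same answer.
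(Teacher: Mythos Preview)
Your first display, chaining LQT, Corollary~\ref{cor_1}, and LQT again to obtain
\[
\operatorname*{Prim}H_{\bullet}(\mathfrak{gl}_{\infty}(E(R)),k)\;\cong\;\operatorname*{Prim}H_{\bullet}(\mathfrak{gl}_{\infty}(R),k)[-1],
\]
is exactly what the paper does. The gap is precisely where you locate it, namely the passage from $\mathfrak{gl}_{\infty}(E(R))$ to $\mathfrak{gl}_{\operatorname*{top}}(\infty,R)=\mathfrak{gl}_{1}(E(R))$, but your proposed bridge does not work.

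There is no ``standard fact'' that for a general unital $k$-algebra $A$ the map $\mathfrak{gl}_{1}(A)\to\mathfrak{gl}_{\infty}(A)$ induces an isomorphism on primitive Lie homology; by LQT the right-hand side is $HC_{\bullet-1}(A)$, while the left is $H_{\bullet}(A^{Lie},k)$, and these differ already for $A=k$. Your attempt to rescue this via the trace-class ideal conflates two unrelated objects: $I^{0}(R)\subset E(R)$ consists (roughly) of finite-rank operators over $R$, whereas $\mathfrak{gl}_{\infty}(E(R))$ consists of finite matrices with entries in $E(R)$. Moreover, the assertion $HC_{\bullet}(I^{0}(R))\cong HC_{\bullet}(E(R))$ cannot hold: combined with Morita invariance it would give $HC_{\bullet}(E(R))\cong HC_{\bullet}(R)$, contradicting the degree shift of Corollary~\ref{cor_1}. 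The quotient $E(R)/I^{0}(R)$ is not something to which the Eilenberg swindle of Lemma~\ref{lem_EilenbergSwindle} applies.

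The paper's actual argument for this step is a self-similarity trick specific to $E(R)$: since $``R((t))\text{''}=\coprod_{\mathbb{N}}R\oplus\prod_{\mathbb{N}}R$, a bijection $\mathbb{N}\cong\mathbb{N}\sqcup\cdots\sqcup\mathbb{N}$ gives isomorphisms $``R((t))\text{''}\cong``R((t))\text{''}^{\oplus n}$ in $\mathsf{Tate}_{\aleph_{0}}(\mathsf{P}_{f}(R))$ for every $n\geq 1$. Taking endomorphism algebras yields algebra isomorphisms $E(R)\cong\mathbb{M}_{n}(E(R))$, hence Lie algebra isomorphisms $\mathfrak{gl}_{1}(E(R))\cong\mathfrak{gl}_{n}(E(R))$ compatible with the stabilization maps. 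Thus every transition map $H_{\bullet}(\mathfrak{gl}_{n}(E(R)))\to H_{\bullet}(\mathfrak{gl}_{n+1}(E(R)))$ is an isomorphism, and $H_{\bullet}(\mathfrak{gl}_{1}(E(R)))\cong H_{\bullet}(\mathfrak{gl}_{\infty}(E(R)))$. This is the missing ingredient; once you insert it, your proof coincides with the paper's.
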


\begin{proof}
Write $\mathbb{M}_{n}(R)$ to denote the $(n\times n)$-matrix algebra over an
associative algebra $R$. Using Loday--Quillen--Tsygan twice, we obtain the
isomorphisms%
\[
\operatorname*{Prim}H_{\bullet}(\mathfrak{gl}_{\infty}(E(R)))\underset
{\text{(LQT)}}{\cong}HC_{\bullet-1}(E(R))\cong HC_{\bullet-2}(R)\underset
{\text{(LQT)}}{\cong}\operatorname*{Prim}H_{\bullet}(\mathfrak{gl}_{\infty
}(R))[-1]\text{,}%
\]
where the middle isomorphism is Corollary \ref{cor_1}. Moreover, if $Lie$
denotes the functor sending an associative $k$-algebra to its underlying Lie
algebra, then%
\begin{align}
\mathfrak{gl}_{n}(E(R))  & =Lie\left(  \mathbb{M}_{n}(E(R))\right)
\label{lab14}\\
& =Lie\left(  \mathbb{M}_{n}(\operatorname*{End}\nolimits_{\mathsf{Tate}%
_{\aleph_{0}}(\mathsf{P}_{f}(R))}(``R((t))\text{\textquotedblright}))\right)
\nonumber\\
& =Lie(\operatorname*{End}\nolimits_{\mathsf{Tate}_{\aleph_{0}}(\mathsf{P}%
_{f}(R))}(``R((t))\text{\textquotedblright}^{\oplus n}))\text{.}\nonumber
\end{align}
However, when writing $``R((t))\text{\textquotedblright}:=\coprod_{n\in\mathbb{N}%
}R\oplus\prod_{n\in\mathbb{N}}R$ as in Equation \ref{lab10}, we can write down
a system of isomorphisms%
\[
``R((t))\text{\textquotedblright}\cong``R((t))\text{\textquotedblright}^{\oplus2}%
\cong``R((t))\text{\textquotedblright}^{\oplus3}\cong\cdots\text{,}%
\]
induced from maps between the countable index sets of the product and
coproduct, as in \cite{fioh}, \S 1.3. (That is, essentially we use that the
disjoint union of $n\geq1$ copies of $\mathbb{N}$ is in bijection to
$\mathbb{N}$ itself). Being isomorphic as objects in $\mathsf{Tate}%
_{\aleph_{0}}(\mathsf{P}_{f}(R))$, their endomorphism algebras are also
isomorphic, and thus their Lie algebras. Hence, Equation \ref{lab14} yields
that the transition maps in
\[
\cdots\longrightarrow H_{\bullet}(\mathfrak{gl}_{n}(E(R)))\longrightarrow
H_{\bullet}(\mathfrak{gl}_{n+1}(E(R)))\longrightarrow\cdots
\]
are all isomorphisms, and thus $H_{\bullet}(\mathfrak{gl}_{1}(E(R)))\cong
H_{\bullet}(\mathfrak{gl}_{\infty}(E(R)))$ is an isomorphism. But clearly
$\mathfrak{gl}_{1}(E(R))$ is just a different way to denote the Lie algebra of
$E(R)$, so this is nothing but $\mathfrak{gl}_{\operatorname*{top}}(\infty
,R)$. This finishes the proof.
\end{proof}

Note that this statement is precisely the same as was proven by Fialowski
and\ Iohara for $\mathfrak{gl}(\infty,R)$ in their paper. In particular,
combining both results, we obtain that the homology of the plain uncompleted
Lie algebra $\mathfrak{gl}(\infty,R)$ acting on $R[t,t^{-1}]$, and the
topologically completed variant $\mathfrak{gl}_{\operatorname*{top}}%
(\infty,R)$, acting on $R((t))$, does not differ at all:

\begin{corollary}
$H_{\bullet}(\mathfrak{gl}(\infty,R),k)\cong H_{\bullet}(\mathfrak{gl}%
_{\operatorname*{top}}(\infty,R),k)$.
\end{corollary}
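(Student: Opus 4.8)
The plan is to combine the two main theorems of this note with the corresponding results of Fialowski--Iohara as a black box. By Theorem \ref{thm_body_1} we have $\operatorname*{Prim}H_{\bullet}(\mathfrak{gl}(\infty,R),k)\cong\operatorname*{Prim}H_{\bullet}(\mathfrak{gl}_{\infty}(R),k)[-1]$, and by part (3) of Theorem \ref{thm_FI} the left-hand side equals $\operatorname*{Prim}H_{\bullet}(\mathfrak{gl}_{\operatorname*{top}}(\infty,R),k)$ in the guise that that theorem uses for $\mathfrak{gl}(\infty,R)$; more precisely, both statements assert that the primitive part of the Lie homology, in each case, is $\operatorname*{Prim}H_{\bullet}(\mathfrak{gl}_{\infty}(R),k)[-1]$. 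Hence the primitive parts of $H_{\bullet}(\mathfrak{gl}(\infty,R),k)$ and of $H_{\bullet}(\mathfrak{gl}_{\operatorname*{top}}(\infty,R),k)$ agree.

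The second step is to upgrade from primitive parts to the full homology. For this I would invoke the structure theory of graded-cocommutative connected Hopf (or just coalgebra-with-compatible-algebra) structures in characteristic zero: the Lie homology $H_{\bullet}(\mathfrak{g},k)$ of any Lie algebra $\mathfrak{g}$ carries the graded cocommutative coalgebra structure recalled in Section \ref{sect_1}, and — as in the classical Milnor--Moore / Loday--Quillen--Tsygan setting — for the Lie algebras at hand it is in fact a graded-commutative Hopf algebra, hence (being connected and of finite type in each degree, or at least cocommutative and graded-connected) isomorphic as a coalgebra, indeed as a Hopf algebra, to the free graded-commutative algebra on its primitive part, $H_{\bullet}(\mathfrak{g},k)\cong\Lambda^{\bullet}(\operatorname*{Prim}H_{\bullet}(\mathfrak{g},k))$. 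Applying this to both $\mathfrak{g}=\mathfrak{gl}(\infty,R)$ and $\mathfrak{g}=\mathfrak{gl}_{\operatorname*{top}}(\infty,R)$, the isomorphism of primitive parts from the first step functorially induces an isomorphism of the full homology Hopf algebras.

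Concretely, the chain of isomorphisms I would write down is
\[
H_{\bullet}(\mathfrak{gl}(\infty,R),k)\cong\Lambda^{\bullet}\bigl(\operatorname*{Prim}H_{\bullet}(\mathfrak{gl}_{\infty}(R),k)[-1]\bigr)\cong H_{\bullet}(\mathfrak{gl}_{\operatorname*{top}}(\infty,R),k)\text{,}
\]
the outer isomorphisms being the structure theorem and the middle identification being the combination of Theorem \ref{thm_FI}(3) and Theorem \ref{thm_body_1}. I expect the main obstacle to be purely expository: one must make sure the Hopf-algebra structure theorem applies in exactly the form needed — i.e. that $H_{\bullet}(\mathfrak{gl}(\infty,R),k)$ really is a graded-commutative connected Hopf algebra over a field of characteristic zero (this is standard, going back to Loday--Quillen and Tsygan, since $\mathfrak{gl}(\infty,R)$ and $\mathfrak{gl}_{\operatorname*{top}}(\infty,R)$ are both filtered colimits of matrix Lie algebras with compatible block-sum maps, giving the product), and that the comparison of primitive parts is compatible with this structure. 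Once that is in place the corollary is immediate; no new computation is required.
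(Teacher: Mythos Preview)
Your approach is exactly the one the paper intends: the corollary has no separate proof in the text, only the preceding sentence ``combining both results'', meaning Theorem~\ref{thm_FI}(3) and Theorem~\ref{thm_body_1}. You have correctly identified and spelled out the implicit second step, namely that an isomorphism of primitive parts upgrades to an isomorphism of the full homology via the Hopf-algebra structure theorem in characteristic zero.

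One small imprecision worth flagging: your justification that $H_\bullet(\mathfrak{gl}(\infty,R),k)$ and $H_\bullet(\mathfrak{gl}_{\operatorname*{top}}(\infty,R),k)$ are graded-commutative Hopf algebras is not that these Lie algebras are ``filtered colimits of matrix Lie algebras'' --- neither of them is, in the sense that $\mathfrak{gl}_\infty(R)$ is. The correct reason, which the paper in fact establishes in the proof of Theorem~\ref{thm_body_1} (and Fialowski--Iohara establish for $J(R)$), is the isomorphism $``R((t))\text{''}\cong ``R((t))\text{''}^{\oplus n}$, giving $\mathfrak{gl}_{\operatorname*{top}}(\infty,R)=\mathfrak{gl}_1(E(R))\cong\mathfrak{gl}_\infty(E(R))$, and analogously $\mathfrak{gl}(\infty,R)\cong\mathfrak{gl}_\infty(J(R))$. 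Once one has this identification with a stabilized matrix Lie algebra, Loday--Quillen--Tsygan applies directly and yields both the Hopf structure and the free-on-primitives description. With that correction your argument is complete.
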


The delooping property for Hochschild homology would allow us to prove the
analogous statements for Leibniz homology; we leave this to the reader.
Finally, the same methods work for group homology:

\begin{theorem}
\label{thm_body_2}Let $R$ be an associative unital $k$-algebra. There is a
canonical isomorphism of primitive parts $\operatorname*{Prim}H_{n}%
(\operatorname{GL}_{\infty}(E(R)),\mathbb{Q})\cong\operatorname*{Prim}%
H_{n-1}(\operatorname{GL}_{\infty}(R),\mathbb{Q})$ for all $n\geq2$.
\end{theorem}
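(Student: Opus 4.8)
The plan is to run the same three-step argument as in the proof of Theorem~\ref{thm_body_1}, replacing the Loday--Quillen--Tsygan theorem by its group-homology analogue and invoking the category-level delooping theorem for (non-connective) algebraic $K$-theory instead of for cyclic homology. That $K$-theoretic version of the delooping is essentially already recorded in the excerpt: by Sho Saito's original argument (see the footnote to the proof of Theorem~\ref{thm_deloop}, and \cite{MR3317759}) the statement of Theorem~\ref{thm_deloop} holds verbatim for $K$-theory, and this is the only external input beyond what has already been assembled above.

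\textbf{Step 1.} For any associative unital ring $S$, Quillen's computation of the rational homology of the infinite general linear group --- via the plus construction, the Milnor--Moore theorem, and the rational Hurewicz theorem for $H$-spaces --- shows that $H_\bullet(\operatorname{GL}_\infty(S),\mathbb{Q})$ is a primitively generated graded-commutative Hopf algebra whose primitive part is identified, canonically via the Hurewicz map, with rational $K$-theory: $\operatorname{Prim}H_n(\operatorname{GL}_\infty(S),\mathbb{Q})\cong K_n(S)\otimes_{\mathbb{Z}}\mathbb{Q}$ for all $n\geq1$. Applying this once for $S=E(R)$ and once for $S=R$, Theorem~\ref{thm_body_2} reduces to producing a canonical isomorphism $K_n(E(R))\otimes\mathbb{Q}\cong K_{n-1}(R)\otimes\mathbb{Q}$ for $n\geq2$.

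\textbf{Step 2.} This is precisely the $K$-theoretic shadow of the delooping theorem. By the Key Lemma (Lemma~\ref{lemma_ModuleCat}) there is an exact equivalence $\mathsf{P}_f(E(R))\cong\mathsf{Tate}_{\aleph_0}(\mathsf{P}_f(R))$, so $K(E(R))\cong K(\mathsf{Tate}_{\aleph_0}(\mathsf{P}_f(R)))$. Now repeat the proof of Theorem~\ref{thm_deloop} with cyclic homology replaced by the non-connective $K$-theory spectrum $\mathbb{K}$: Keller's Localization Theorem is replaced by Schlichting's localization theorem for non-connective $K$-theory of exact categories, the Eilenberg swindle of Lemma~\ref{lem_EilenbergSwindle} kills $\mathbb{K}(\mathsf{Ind}_{\aleph_0}^a(\mathcal{C}))$ and $\mathbb{K}(\mathsf{Pro}_{\aleph_0}^a(\mathcal{C}))$, and Saito's identification of the relevant quotient categories again collapses the homotopy bicartesian square to the loop-space square, yielding $\mathbb{K}(\mathsf{Tate}_{\aleph_0}(\mathsf{P}_f(R)))\simeq\Omega^{-1}\mathbb{K}(\mathsf{P}_f(R))$; this is exactly Saito's theorem \cite{MR3317759}. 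Hence $\mathbb{K}_n(E(R))\cong\mathbb{K}_{n-1}(R)$. Since $E(R)$ and $R$ are rings, $\mathsf{P}_f(E(R))$ and $\mathsf{P}_f(R)$ are idempotent complete, so $\mathbb{K}_m$ agrees with Quillen $K_m$ for $m\geq0$; for $n\geq1$ this gives $K_n(E(R))\cong K_{n-1}(R)$. Tensoring with $\mathbb{Q}$ and feeding this back into Step~1 proves the theorem for $n\geq2$. The exclusion of $n=1$ is forced by the degree shift: there the left-hand side is $K_1(E(R))\otimes\mathbb{Q}\cong K_0(R)\otimes\mathbb{Q}$, which in general does not vanish, whereas $\operatorname{Prim}H_0(\operatorname{GL}_\infty(R),\mathbb{Q})=0$.

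The main obstacle is not a single hard theorem but bookkeeping at the two interfaces. First, one must check that the isomorphism $\operatorname{Prim}H_n(\operatorname{GL}_\infty(S),\mathbb{Q})\cong K_n(S)\otimes\mathbb{Q}$ is genuinely canonical --- this is where care is needed about the $H$-space structure coming from block sum of matrices and the Milnor--Moore description of the primitives as the image of the rational Hurewicz map --- so that the word ``canonical'' in the statement is justified and compatible with the coalgebra structure of the excerpt. Second, there is the connective-versus-non-connective $K$-theory bookkeeping: one must verify that Saito's delooping, which is naturally phrased for the non-connective spectrum, restricts to the shift we need precisely in the range $n\geq1$ where the two flavours of $K$-theory of a ring coincide. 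Both points genuinely use that $\mathsf{P}_f(S)$ is idempotent complete, which is exactly why the idempotent completion was built into $\mathsf{Tate}_{\aleph_0}$ and into the Key Lemma; modulo this, the argument is routine.
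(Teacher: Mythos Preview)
Your proof is correct and follows essentially the same route as the paper: identify $\operatorname{Prim}H_n(\operatorname{GL}_\infty(-),\mathbb{Q})$ with rational Quillen $K$-theory via Milnor--Moore, invoke Saito's $K$-theoretic delooping $\mathbb{K}(\mathsf{Tate}_{\aleph_0}(\mathsf{P}_f(R)))\simeq\Omega^{-1}\mathbb{K}(\mathsf{P}_f(R))$ together with the Key Lemma, and then use that connective and non-connective $K$-theory of a ring agree in the relevant non-negative degrees to conclude. The paper's proof is terser but makes exactly the same moves; your additional remarks on canonicity, the role of idempotent completeness, and the failure at $n=1$ are correct elaborations rather than a different strategy.
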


\begin{proof}
The proof follows exactly the same pattern as the one of Theorem
\ref{thm_body_1}; we only need to employ the picture (due to Feigin and Tsygan
\cite{MR923136}) to view cyclic homology as the additive analogue of
$K$-theory. Then take the arguments for cyclic homology and adapt them. In
detail: The Loday--Quillen--Tsygan theorem is modelled after the following
fact, essentially belonging to the realm of rational homotopy theory:%
\[
K_{n}(R)\otimes_{\mathbb{Z}}\mathbb{Q}\cong\operatorname*{Prim}H_{n}%
(\operatorname{GL}_{\infty}(R),\mathbb{Q})\text{,}%
\]
where the latter denotes the primitive part of the group homology of
$\operatorname{GL}(R)$, equipped with the discrete topology. Moreover,
$K_{\bullet}(-)$ denotes the usual Quillen (that is: connective) $K$-theory.
Now replace Theorem \ref{thm_deloop} by Saito's original theorem for
(non-connective) $K$-theory \cite{MR3317759}. We get $K_{n+1}(\mathsf{Tate}%
_{\aleph_{0}}(\mathsf{P}_{f}(R)))\cong K_{n}(\mathsf{P}_{f}(R))$ for all
$n\geq1$; because for $n\geq1$ the connective and non-connective $K$-theories
agree. Thus,%
\begin{align*}
\operatorname*{Prim}H_{n+1}(\operatorname{GL}_{\infty}(E(R)),\mathbb{Q})  &
\cong K_{n+1}(E(R))\otimes\mathbb{Q}\cong K_{n+1}(\mathsf{Tate}_{\aleph_{0}%
}(\mathsf{P}_{f}(R)))\otimes\mathbb{Q}\\
& \cong K_{n}(\mathsf{P}_{f}(R))\otimes\mathbb{Q\cong}\operatorname*{Prim}%
H_{n}(\operatorname{GL}_{\infty}(R),\mathbb{Q})\text{.}%
\end{align*}
This finishes the proof.
\end{proof}

\begin{acknowledgement}
I heartily thank M. Groechenig and J. Wolfson. I am most grateful to K. Iohara
for his interest and correspondence.
\end{acknowledgement}

\bibliographystyle{amsalpha}
\bibliography{acompat,ollinewbib}

\end{document}